\title{Martingale Decomposition and BSDE on Time Scales}
\author{Guofeng Tang}
\affil{Zhongtai Securities Institute for Financial Studies\authorcr Shandong University \authorcr tangguofengmath@mail.sdu.edu.cn}
\date{\today}
\newtheorem{theorem}{Theorem}[section]%%¶¨Àí»·¾³{¶¨Àí¶¨Òå}{¶¨ÀíÏÔʾ}{¼ÆÊý±ê×¼}
\newtheorem{lemma}[theorem]{Lemma}
\newtheorem{definition}[theorem]{Definition}
\newtheorem{example}[theorem]{Example}
\newtheorem{remark}[theorem]{Remark}
\begin{document}
\date{}
\maketitle

\begin{abstract}
In this paper, we present martingale decomposition on time scales.
We establish the related backward stochastic dynamic equations on time scales (this paper BS$\nabla$E for short,
concerning $\nabla$-integral on time scales) which unify backward stochastic differential equations and backward stochastic difference equations.
We prove the existence and uniqueness theorem of BS$\nabla$E.
This work can be considered as a unification and a generalization of similar results in backward stochastic difference equations and backward stochastic differential equations.
\end{abstract}
\smallskip
\noindent \textbf{keywords:} Time Scales, Martingale Decomposition on time scales,
BSDE on time scales

\section{Introduction}

In 1988, Stefan Hilger \cite{Higer1990} introduced the calculus of measure chains to unify continuous and discrete analysis.
Since then, this topic has received much attention, especially on the deterministic analysis  \cite{integration}, \cite{basics}.

For the stochastic integral on time scales, Suman \cite{Doctoral} tried to define ¡°stochastic integral on time scales¡±, but he just dealt with isolated time scales.
David Grow and Suman Sanyal \cite{existence} extended the stochastic integral for Brownian motion to a general time scale.
The existence and uniqueness of the strong solutions of a certain class of stochastic dynamic equations (S$\Delta$E) was proved.
Wenqing Hu \cite{girsanov} provided an It\^{o}'s formula with respect to (w.r.t.) the Brownian Motion on time scales
and demonstrated a Girsanov's change of measure formula in the case of general time scales.
Jocad and Shiryaev\cite{limit} showed that the discrete case actually reduced to a particular case in the general continuous one.
To that effect, they considered a discrete stochastic basis $\mathcal{B}=(\Omega,\mathcal{F},\mathbf{F}=(\mathcal{F}_n)_{n\in \mathds{N}},\mathbf{P})$.
They associated to $\mathcal{B}$ a "continuous " stochastic basis $\mathcal{B}'$ as follows:
$\mathcal{B}'=(\Omega,\mathcal{F}'_t,\mathbf{F}'), ~\text{with}~ \mathcal{F}_t'=\mathcal{F}_n,~ \text{for}~ t\in[n,n+1)$,
and the discrete process $X_n$ on $\mathcal{B}$ by $X_t'=X_n, t\in \left[n,n+1\right)$.
Bohner \cite{generaltimescale} constructed a stochastic integral and stochastic differential equations on general time scales with the same method.
These integrals are $\Delta$-integration on the semi-open intervals of the form $\left[t_i,t_{i+1}\right)$.
Meanwhile, by the requirement of the predictable integrand, it is easy to consider semi-open intervals $\left(t_i,t_{i+1}\right]$.
Nguyen Huu Du and Nguyen Thanh Dieu \cite{first} defined a stochastic calculus on time scales (for the $\nabla$ case),
in which they presented the Doob-Meyer decomposition theorem for a submartingale indexed by a time scale.
They also defined the $\nabla$-stochastic integration, It\^{o}'s formula with respect to the square integral martingales.
In \cite{SDE2013}, they considered the $\nabla$-stochastic dynamic equation on time scales (S$\nabla$E),
in which they gave the conditions for the existence and uniqueness of solutions, Markovian property of the solutions.
For the Backward Stochastic calculus on time scales, there are few results.

The theory of BSDE on continuous-time is a mature field,
while the discrete counterpart BSDE was rarely studied.
For BSDE on continuous-time:
\begin{equation}
\left\{
\begin{array}{ll}
Y_t=Y_T+\int_t^Tg(u,Y_u,Z_u)du-\int_t^TZ_ud W_u,\quad t\in \mathbf{R}_{+};\\
Y_T=\xi.\\
\end{array}
\right.
\end{equation}
A solution of this equation, associated with the terminal value $\xi$ and generator $g(\omega,t,y,z)$,
is a couple of stochastic processes $(Y_t,Z_t)_{t\in \mathbf{R}_{+}}$ which satisfies the equation.
A general nonlinear BSDE was first introduced by Pardoux and Peng \cite{adaptedBsde}.

Concerning Backward Stochastic Difference Equations, the formal work focus on the order of convergence as a numerical scheme, rarely the discrete scheme itself.
Many algorithms for BSDE are based on random walk framework. This equation has a unique solution since the driven martingale has the
Predictable Representation Property (\textbf{PRP}).
Cohen \cite{cohenfinite} studied the Backward Stochastic Difference Equations on spaces related to discrete-time, which is driven by finite-state processes.
The finite-state driven process has the predictable representation property. This kind of difference equation:
\begin{equation}\label{discretebsde}
\left\{
\begin{array}{ll}
Y_{t+1}-Y_t=-g(t,Y_t,Z_t)+Z_t(M_{t+1}-M_t),\quad t\in \mathds{N};\\
Y_T=\xi, \quad T\in \mathds{N}.\\
\end{array}
\right.
\end{equation}
has a unique solution $(Y_t,Z_t)_{t\in\mathds{N}}$.
The typical example of a model based on a finite probability space is the ¡°binomial¡± model, also known as the ¡°Cox-Ross-Rubinstein¡± model in finance.

Another kind of Backward Stochastic Difference Equation is driven by increment independent processes, such as i.i.d normal distributions.
By switching from the Brownian motion on continuous-time to discrete-time, we lose the predictable representation property.
It is well known that we need to include in the formulation of the BSDE on discrete-time an additional orthogonal martingale term.
\begin{equation}
\left\{
\begin{array}{ll}
Y_{t+1}-Y_t=-g(t,Y_t,Z_t)+Z_t(W_{t+1}-W_t)+N_{T}-N_t, \quad t\in \mathds{N};\\
Y_T=\xi.\\
\end{array}
\right.
\end{equation}
By the Galtchouk-Kunita-Watanabe Theorem, the solution of this kind equation is a triple tuple $(Y,Z,N)$,
where $N_t$ is an orthogonal martingale to the integrals w.r.t the driven process $W_t$.
Bielecki \cite{conic} first studied the existence and uniqueness of the solutions by the Galtchouk-Kunita-Watanabe decomposition of discrete BSDE.
In \cite{cohengeneral}, Cohen considered BSDE in discrete time with infinitely many states, the driven process was a martingale difference process.

For the general difference equations, there are many types: implicit or explicit.
\begin{equation}\label{discretebsde1}
\left\{
\begin{array}{ll}
Y_{k+1}-Y_{k}=-g(\omega,k,Y_k,Z_k)+Z_k(W_{k+1}-W_k)+N_{k+1}-N_k\\
Y_T=\xi\\
\end{array}
\right.
\end{equation}
\begin{equation}
\left\{
\begin{array}{ll}
Y_{k+1}-Y_{k}=-g(\omega,k+1,Y_{k+1},Z_k)+Z_k(W_{k+1}-W_k)+N_{k+1}-N_k\\
Y_T=\xi\\
\end{array}
\right.
\end{equation}
\begin{equation}
\left\{
\begin{array}{ll}
Y_{T}-Y_{T-1}=-h(\omega,Y_T)+Z_{T-1}(W_T-W_{T-1})+(N_T-N_{T-1})\\
Y_{k+1}-Y_{k}=-g(\omega,k+1,Y_{k+1},Z_{k+1})+Z_k(W_{k+1}-W_k)+(N_{k+1}-N_k)\\
Y_T=\xi\\
\end{array}
\right.
\end{equation}
%Lin \cite{nont} studied a backward stochastic difference equation without $N_t$, the equation does not necessarily have the adapted solutions in the usual sense.
%Then they introduced the definition of minimum mean square solutions.
%Actually, for the completeness, the $N_t$ should not be omitted in Backward Stochastic Difference equations when the driven process has no \textbf{PRP}.

This paper aims to establish the existence and uniqueness of solutions to BSDEs on time scales (BS$\nabla$E for our case).
The backward stochastic dynamic equations w.r.t. Brownian motion on time scales are similar to traditional BSDEs on continuous-time driven by general
c$\grave{a}$dl$\grave{a}$g martingales \cite{BSDEdriven}.
We consider the following BS$\nabla$E:
\begin{equation}
\left\{
\begin{array}{ll}
Y_t=Y_T+\int_t^Tg(u,Y(u-),Z(u))\nabla u-\int_t^TZ(u)\nabla W_u+N_T-N_t;\quad t\in\mathds{T},\\
Y_T=\xi.\\
\end{array}
\right.
\end{equation}
The solution is given by $(Y,Z,N)$, where $N$ can not be constructed by the integrals w.r.t. the Brownian motion on time scales. We assume the usual conditions on BSDE:
$\xi\in L^2(\Omega,\mathcal{F},\mathbf{P};\mathbf{R}^n)$, and $g$ satisfies the Lipschitz  condition.
Under the conditions, we provide existence and uniqueness results of Backward Stochastic Dynamic Equations on time scales.

The paper is organized as follows. In section 2, we provide basic definitions regarding time scales.
In section 3, we give some stochastic notations and results on time scales, we state and prove the martingale decomposition theorem on time scales.
And finally, in section 4, we give the main results: the existence and uniqueness solution of backward stochastic dynamic equations on time scales.

\section{Preliminaries}

For the theory of calculus on time scales, we refer to the original work by Higer \cite{Higer1990}.
In this section, we give basic definitions concerning the time scales and present some applications.
For more additional information on the theory of time scales, we refer the reader to the monograph by Bohner and Peterson \cite{advance}.

A \emph{time scale} $\mathds{T}$ is a nonempty closed subset of the real numbers $\mathbf{R}$.
The distance between the points $t,s\in \mathds{T}$ is defined as the normal distance on $\mathbf{R}:|t-s|$.
For t$\in \mathds{T}$, we define the \emph{forward jump operator} $\sigma$: $\mathds{T} \rightarrow \mathds{T}$ by $\sigma(t)=\inf\{s\in \mathds{T}: s>t\}$,
while the \emph{backward jump operator} $\rho$: $\mathds{T} \rightarrow \mathds{T}$ by $\rho(t)=\sup\{s\in \mathds{T}: s<t\}$.
Of course both $\sigma(t)$ and $\rho(t)$ are in $\mathds{T}$ when $t \in \mathds{T}$.
This is because we assume that $\mathds{T}$ is a closed subset of $\mathbf{R}$.
We say that $t$ is \emph{right-scattered} (\emph{left-scattered}, \emph{right-dense}, \emph{left-dense}), if $\sigma(t)>t$ ($\rho(t)<t$, $\sigma(t)=t$, $\rho(t)=t$) hold.
$\mu(t)=\sigma(t)-t$ is called \emph{graininess}, $\nu(t)=t-\rho(t)$ is called \emph{backward graininess}.
The set of right-scattered points of a time scale is at most countably infinite.
For $a,b \in \mathds{T}$ with $a\leq b$ we define the closed interval in $\mathds{T}$ by $[a,b]_{\mathds{T}}=[t\in \mathds{T}:a\leq t \leq b]$.
Other types of intervals are defined similarly.
We introduce the set $\mathds{T}_k$
if $\mathds{T}$ has a right-scattered minimum $t_2$, then $\mathds{T}_k=\mathds{T}-\{t_2\}$, otherwise $\mathds{T}_k=\mathds{T}$.
If $t\in \mathds{T}_k$, then the $\nabla$-\emph{derivative} of $f$ at the point $t$ is defined to be the number $f^{\nabla}(t)$(provided it exists) with the property that for each $\epsilon>0$,
there is a neighborhood $U$(in $\mathds{T}$)of $t$ such that $|f(\rho(t))-f(s)-f^{\nabla}(t)[\rho(t)-s]|\leq \epsilon |\rho(t)-s|, \quad \forall s\in U$.
Now suppose that $f:\mathds{T}\rightarrow \mathbf{R}$. Continuity of $f$ is defined in the usual manner.
A function $f$ is called right-dense continuous (rd-continuous) on $\mathds{T}$ iff it is continuous at every right-dense point and the left-sided limit exists at every left-dense point.
It is similar to the notation of left-dense continuous (ld-continuous).
Note that, on the right-scattered points, $f(t)\neq f(t+)$ for continuous functions on time scales.
Denote $\lim_{\sigma(s)\uparrow t}f(s)$ by $f(t-)$ if this limit exists, $\lim_{\rho(s)\downarrow t}f(s)$ by $f(t+)$.
If $t$ is left-scattered then $f(t-)=f(\rho(t))$, right-scattered then $f(t+)=f(\sigma(t))$.

Let $A$ be an increasing right-continuous function of finite variation defined on $\mathds{T}$.
We denote by $\mu^A_{\nabla}$ the Lebesgue $\nabla$-measure associated with $A$.
For any $\mu^A_{\nabla}$-measure function $f:\mathds{T}\rightarrow \mathbf{R}$,
we write $\int_a^tf_s\nabla A_s$ for the integral of $f$ with respect to the measure $\mu_{\nabla}^A$ on $\left(0,t\right]$.
It is seen that the function $t \mapsto \int_a^tf_s\nabla A_s$ is c$\grave{a}$dl$\grave{a}$g.
For details, we can refer to \cite{first}.
\subsection{Stochastic calculus on time scales}

As time scales unify continuous and discrete analysis,
we assume that the readers are familiar with the concepts and basic properties of the stochastic process in both discrete-time and continuous-time.
We denote by $\mathbf{R}^k$ the k-dimensional Euclidean space,
equipped with the standard inner product
$(\cdot ,\cdot)$, and the Euclidean norm $|\cdot|$.
We also denote by $\mathbf{R}^{k\times d}$ the collection of all $k\times d$ real matrices, and for matrix $z=(z_{ij})_{k\times d}$.
We denote $z_i:=(z_{i1},\cdots,z_{id})^T$ and $|z|:=\sqrt{tr(zz^T)}$,
where $z^T$ represents the transpose of $z$.
In our study, we will be concerned with bounded time scales in which $a=\inf \mathds{T}$ and $b=\sup\mathds{T}$ and $a,b\in \mathds{T}$ are both finite.
For simplicity, we consider $a=0, b=T>0$. By convention, we write $\rho(0)=0$.
The basic definitions of process on time scales are analogous to that of classical discrete-time processes or continuous-time processes,
such as $\{\mathcal{F}_t\}_{t\in \mathds{T}}$-filtration,
$\mathcal{M}_{\mathds{T}}$-the space of uniformly integrable martingales,
$\mathcal{M}_{\mathds{T}}^2$-the space of square integrable martingales with $E\{\sup |M|_t^2\}< \infty$.
We add the subscript $\mathds{T}$ to spaces on time scales.
Most results of martingales also hold in the time scale cases.
We are interested in the distinctions between them.
We assume that we are working on a probability space $(\Omega,\mathcal{F},\mathbf{F}=(\mathcal{F}_t)_{t\in \mathds{T}},\mathbf{P})$
with the filtration $\{\mathcal{F}_t\}_{t\in \mathds{T}}$ satisfying the usual conditions
($\mathcal{F}_t\neq\mathcal{F}_{t+}$ on right-scattered points even it is continuous on time scales, since right-continuous has no meaning at right-scattered points).
%$\{\mathcal{F}_{\rho(t)+}\}$ is continuous
For simplicity, $\mathcal{F}_{T}=\bigvee_{t\in\mathds{T}}\mathcal{F}_t$.

We denote by
$\mathcal{M}^2_{\mathds{T}}$ the space of square integrable $\{\mathcal{F}_t\}_{t\in\mathds{T}}$-martingales on time scales and consider $M \in \mathcal{M}^2_{\mathds{T}}$.
Since $M^2$ is a submartingale, following the Doob-Meryer decomposition theorem on time scales \cite{first}, there exists uniquely a natural increasing process
$(\langle M\rangle_t)_{t\in\mathds{T}}$, such that $M^2-\langle M\rangle$ is an $\mathbf{F}$-martingale.
The natural increasing process $\langle M\rangle$ is called characteristic of the martingale $M$.

We define the quadratic co-variation $[M,N]$ of two processes $M,N$ similar to \cite{first}, Definition 3.13.
If $N=M$ we write $[M]_t$ for $[M,M]_t$ and call it the quadratic variation of $M$. For partitions $\{t_i\}$ of $[s,t]_{\mathds{T}}$ with $\max_i(\rho(t_i)-t_{i-1})\leq 2^{-n}$, we have
$$E[(M_t-M_s)^2|\mathcal{F}_s]=E[\sum_{i}(M_{t_i}-M_{t_{i-1}})^2|\mathcal{F}_s]\rightarrow E[[M]_t-[M]_s|\mathcal{F}_s].$$
It means, $M^2-[M]$ is an $\mathbf{F}$-martingale, which implies that $[M]-\langle M \rangle$ is also a $\mathbf{F}$-martingale.

As the one-dimensional case was given in \cite{brownian}, the multi-dimension Brownian motion can be constructed as the classical product space method \cite{GTM}.
Now we give a result about multi-dimension Brownian motion on time scales.
\begin{lemma}
Let $\{W_t=(W_t^1.W_t^2,\cdots,W_t^d),\mathcal{F}_t;t\in\mathds{T}\}$ be a d-dimensional Brownian motion.
The processes
$$M_t^i=W_t^i-W_0^i,~\mathcal{F}_t;~t\in\mathds{T},~1\leq i\leq d,$$
are continuous, square-integrable martingales, with $\langle M^i,M^j\rangle=t\delta_{ij};1\leq i,j\leq d$.
$[M^i,M^j]=\delta_{ij}\{\lambda([t_0,t]\cap \mathds{T})+\sum_{t_0\leq a_n< b_n\leq t}(W_{b_n}-W_{a_n})^2\}, \forall t\in \mathds{T}$,
where $\lambda$ denotes the classical Lebesgue measure
and $\cup_{n=1}^{\infty}(a_n,b_n)=\left[t_0,\infty\right)\setminus \mathds{T}$ is the expression
for the open subset $\left[t_0,\infty\right)\setminus \mathds{T}$ of $\mathbf{R}$ as the countable of disjoint open intervals\cite{quadratic}.
Furthermore, the vector of martingales $M=(M^1,\cdots,M^d)$ is independent of $\mathcal{F}_0$.
\end{lemma}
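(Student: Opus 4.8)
The plan is to establish the four assertions in order, reducing each to the defining properties of one-dimensional Brownian motion on $\mathds{T}$ from \cite{brownian} together with the martingale tools recalled above. First I would note that each coordinate $W^i$ is a one-dimensional Brownian motion on $\mathds{T}$, as guaranteed by the product-space construction \cite{GTM}; in particular its increments are centered, mutually independent over disjoint intervals, independent of the past, and satisfy $E[(W^i_t-W^i_s)^2\mid\mathcal{F}_s]=t-s$ for $s\le t$ in $\mathds{T}$ (the variance equals the full real-line length, which is what later forces $\langle M^i\rangle_t=t$). Centredness and independence of the increments make $M^i_t=W^i_t-W^i_0$ an $\mathbf{F}$-martingale; square integrability follows since on the bounded time scale $[0,T]_{\mathds{T}}$ Doob's maximal inequality gives $E[\sup_t|M^i_t|^2]\le 4E[|M^i_T|^2]=4T<\infty$, so $M^i\in\mathcal{M}^2_{\mathds{T}}$; and continuity on $\mathds{T}$ is inherited from that of $W^i$.

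For the characteristic I would treat the diagonal and off-diagonal terms separately. On the diagonal, expanding $(M^i_t)^2-(M^i_s)^2=(M^i_t-M^i_s)^2+2M^i_s(M^i_t-M^i_s)$ and conditioning on $\mathcal{F}_s$ kills the cross term and leaves $E[(M^i_t)^2-t\mid\mathcal{F}_s]=(M^i_s)^2-s$, so $(M^i_t)^2-t$ is a martingale; since $t\mapsto t$ is a deterministic continuous, hence natural, increasing process, uniqueness in the Doob-Meyer decomposition on time scales \cite{first} gives $\langle M^i\rangle_t=t$. Off the diagonal, independence of $W^i$ and $W^j$ for $i\ne j$ makes $M^i_tM^j_t$ an $\mathbf{F}$-martingale, so $\langle M^i,M^j\rangle=0$; equivalently one polarizes, observing that $M^i\pm M^j$ each have characteristic $2t$, whence $\langle M^i,M^j\rangle=\tfrac14(\langle M^i+M^j\rangle-\langle M^i-M^j\rangle)=0$. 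Together these give $\langle M^i,M^j\rangle=t\delta_{ij}$.

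The quadratic variation is the substantive point. Starting from the approximation recalled above, namely that $\sum_k(M^i_{t_k}-M^i_{t_{k-1}})^2$ converges to $[M^i]_t$ along partitions with $\max_k(\rho(t_k)-t_{k-1})\le 2^{-n}$, I would split the sum using the decomposition $[t_0,\infty)\setminus\mathds{T}=\bigcup_n(a_n,b_n)$ of the complement of the time scale into disjoint open intervals \cite{quadratic}. Over the right-dense part of $[t_0,t]$ the increments behave as those of a classical Brownian motion and the partition sums converge in $L^2$ to the Lebesgue measure $\lambda([t_0,t]\cap\mathds{T})$, exactly as in the classical quadratic-variation computation. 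At each gap the left endpoint $a_n$ is right-scattered with $b_n=\sigma(a_n)$, so the single squared increment $(W_{b_n}-W_{a_n})^2$ is never refined away and survives in the limit; summing over the gaps yields $\sum_{t_0\le a_n<b_n\le t}(W_{b_n}-W_{a_n})^2$ and establishes the stated identity. For $i\ne j$ the cross sums have vanishing expectation by independence and tend to $0$, producing the factor $\delta_{ij}$. Finally, independence of $M=(M^1,\dots,M^d)$ from $\mathcal{F}_0$ follows because every finite-dimensional vector $(M_{t_1},\dots,M_{t_m})$ is a linear image of the increments $W_{t_1}-W_0,\dots,W_{t_m}-W_{t_{m-1}}$, which are independent of $\mathcal{F}_0$ by the independent-increment property.

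I expect the quadratic-variation step to be the main obstacle: one must rigorously justify that in a single limit over refining partitions the right-dense part contributes the continuous clock $\lambda([t_0,t]\cap\mathds{T})$ while the right-scattered gaps contribute their genuine squared jumps without being averaged out. This is precisely the feature that separates $[M^i]$ from $\langle M^i\rangle_t=t$, since passing to expectations replaces each surviving jump $(W_{b_n}-W_{a_n})^2$ by its mean $b_n-a_n$ and thereby refills the gaps to recover the full Lebesgue measure $t-t_0$. Controlling the two regimes simultaneously and indexing the jump terms correctly by the complement intervals $(a_n,b_n)$ is where the continuous-versus-time-scale distinction must be handled with the greatest care.
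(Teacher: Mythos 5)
The paper offers no proof of this lemma at all: it is stated as a direct consequence of the one-dimensional construction in \cite{brownian}, the product-space construction of \cite{GTM}, and the quadratic-variation computation of \cite{quadratic}, so there is no argument of the paper's to compare yours against step by step. Most of your reconstruction is sound and matches what those citations supply: the martingale and square-integrability claims, $\langle M^i\rangle_t=t$ via Doob--Meyer uniqueness, $\langle M^i,M^j\rangle=0$ for $i\neq j$ via independence or polarization, the diagonal quadratic variation via the gap-versus-dense-part partition analysis of \cite{quadratic}, and the independence from $\mathcal{F}_0$ from the independent-increment property.

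However, your treatment of the off-diagonal quadratic covariation has a genuine gap. You argue that for $i\neq j$ ``the cross sums have vanishing expectation by independence and tend to $0$.'' Vanishing expectation does not give a vanishing limit, and on a time scale with gaps the limit is in fact not zero: exactly as you observe for the diagonal terms, the single cross product $(W^i_{b_n}-W^i_{a_n})(W^j_{b_n}-W^j_{a_n})$ across a gap $(a_n,b_n)$ is never refined away, so the partition sums converge to
\begin{equation}\nonumber
\sum_{t_0\le a_n<b_n\le t}(W^i_{b_n}-W^i_{a_n})(W^j_{b_n}-W^j_{a_n}),
\end{equation}
a nondegenerate zero-mean random variable; one sees the same thing by polarizing the diagonal formula applied to $(W^i\pm W^j)/\sqrt{2}$, which are themselves (scaled) Brownian motions on $\mathds{T}$. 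Only the dense-part contribution of the cross sums vanishes. Consequently the factor $\delta_{ij}$ in the displayed formula for $[M^i,M^j]$ is justified only after taking conditional expectations, i.e., at the level of the compensator $\langle M^i,M^j\rangle$, which is how the paper actually uses $[\cdot,\cdot]$ (through the martingale property of $M^iM^j-[M^i,M^j]$). As written, your argument would fail at this step, and a correct proof of the stated identity must either interpret $[M^i,M^j]$ in that compensated sense or restrict the pathwise partition-limit claim to the diagonal case $i=j$.
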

\begin{remark}
The Levy martingale characterization of Brownian motion failed on time scales, that's a continuous martingale with $\langle X^i,X^j\rangle=t\delta_{ij}$, cannot be a Brownian motion.
\end{remark}
Our stochastic integral on time scales is based on Nguyen Huu Du \& Nguyen Thanh Dieu \cite{first},
in which they established the stochastic $\nabla$-integral w.r.t. square integral martingales, and extended to special semimartingales \cite{SDE2013}.
Consider the integral with respect to Brownian motion $W$ on time scales (square integral on time scales):
let $\mathcal{L}_{\mathds{T}}^2(\left(0,T\right];W)$  be the space of all real-valued, predictable processes $\phi=\{\phi_t\}_{t\in \mathds{T}}$ satisfying
$\|\phi\|^2_{t,W}=E\int_0^t|\phi_s|^2\nabla \langle W\rangle_s=E\int_0^t|\phi_s|^2\nabla s <\infty$.
By using [\cite{first}, Definition 3.6], we can define the integral:
$I_t(X)=\int_0^tX_s \nabla W_s, \quad X\in \mathcal{L}_{\mathds{T}}^2(\left(0,T\right];W)$.
The space $\mathcal{L}_{\mathds{T}}^2(\left(0,T\right];W)$
is actually the $L^2$ space under the measure given by $\nabla \langle W\rangle \times dP$.
Now we define
\textbf{the Multi-dimension Stochastic Integral:}
\begin{definition}
Let $\{W_t,t\in\mathds{T}\}$ be a $d$-dimension Brownian motion on time scales,
$\{X_t,t\in\mathds{T}\}$ is $k\times d$ matrix process, $\forall 1\leq i\leq n,1\leq j\leq d$, $X^{i,j}\in\mathcal{L}^2_{\mathds{T}}(\left(0,T\right];W^j)$.
Define:
\begin{equation}
I_{t,n}(X)=\int_0^tX_s\nabla W_s=\int_0^t
\left(
\begin{array}{lll}
X_{11}& \cdots& X_{1d}\\
\vdots&  \quad &\vdots\\
X_{n1}& \cdots& X_{nd}\\
\end{array}
\right)
\left(
\begin{array}{l}
\nabla W_s^1\\
\vdots\\
\nabla W_s^d\\
\end{array}
\right).
\end{equation}
We denote the multi-dimensional stochastic integral $I_{t,k}(X)$ for $X$,
where $X$ is $(\mathcal{F}_t)_{t\in\mathds{T}}$-predictable, $\mathbf{R}^{k\times d}$-valued processes
such that $E\int_0^T|X_s|^2\nabla t<\infty$, denote the set by $\mathcal{L}^2_{\mathds{T}}((\left(0,T\right];\mathbf{R}^{k\times d})$
(short for $\mathcal{L}^2_{\mathds{T},k\times d}$).
We denote the integral also by $I_t(X)$ for short.
The i-th component of $I_t(X)$ is:
$$I_t^i(X)=\sum_{j=1}^{d}\int_0^t X_s^{ij}\nabla W_s^j, \quad 1\leq i\leq k, \quad t\in \mathds{T}.$$
\end{definition}
Clearly that $I_{t}(X)$ belongs to $\mathcal{M}^2_{\mathds{T},k}$, the $\mathbf{R}^k$-valued square-integral martingale on time scales.
Therefore we also introduce
\begin{itemize}
\item $\mathcal{M}_{\mathds{T},k}^{2,c}$-the continuous $\mathbf{R}^k$-valued square-integral martingale processes on time scale with $M_0=0 \quad\mathbf{P}-a.s.$,
\item $\mathcal{M}_{\mathds{T},k}^{2,*}$-the subset of $\mathcal{M}_{\mathds{T},k}^{2,c}$, such that for each $M\in\mathcal{M}_{\mathds{T},k}^{2,*}$,
    there exists $X\in\mathcal{L}_{\mathds{T},k\times d}^2$, and $M_t=I_t(X)$
\end{itemize}
for any integer $k$.
We will study properties of these spaces next section.
%Note that $\mathcal{M}^{2,*}_{\mathds{T},k}$ forms a closed subspace.
For more properties about the stochastic integral, readers could see \cite{first}.
\begin{remark}
Suman \cite{existence} also defined a stochastic integral on time scale, the $\Delta$-integral.
For the stochastic integral, the integrands are actually predictable in order to establish It\^{o} type integral.
On this occasion, the stochastic integral has martingale property.
(For the discrete case, the martingale transform is the discrete-time version of the stochastic integral with respect to a martingale.)
\end{remark}

\section{Martingale decomposition theorem on time scales}
The fundamental tool on time scales analysis is the countable dense subset.
The countable dense subset will play a role analogous to the dyadic rational numbers that played in the classical analysis from discrete-time to continuous-time.
For any $\delta>0$, consider a partition of $[0,T]_{\mathds{T}}$ inductively by letting $t_0=0$ and for $i=1,2,\cdots$, set:
\begin{equation}
t_i=
\left\{
\begin{array}{lll}
\sup B_i& if~  B_i\neq \emptyset\\
\sigma(t_{i-1})& if~  B_i=\emptyset\\
\end{array}
\right.
\quad where \quad B_i=(t_{i-1},t_{i-1}+\delta]_{\mathds{T}} \bigcap [0,T]_{\mathds{T}}.
\end{equation}
The partition is given in \cite{integration}, \cite{first}.
On time scales, the interval $|t_{i+1}-t_i|$ will not converge to zero.
A more specific result was given by David Grow \cite{brownian}.
Now we provide the optional sampling theorem on time scales to show the basic analysis method on general time scales.

\begin{lemma}[Optional Sampling Theorem on Time Scales] \label{OptionalSampling}
If $X$ is a right-continuous martingale (submartingale) on bounded time scales $\mathds{T}$ with a last element $X_T$
and $S_1,S_2$ are two bounded stopping times with $S_1\leq S_2$ on $\mathds{T}$, then
\begin{equation}\nonumber
E[X_{S_2}|\mathcal{F}_{S_1}]=X_{S_1}(\geq X_{S_1})\quad  \mathbf{P}-a.s..
\end{equation}
\end{lemma}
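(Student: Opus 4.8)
The plan is to reduce the statement to the classical discrete-time optional sampling theorem by discretizing the two stopping times along the partitions $\{t_i\}$ constructed above, and then to let the mesh $\delta\downarrow 0$. First I would fix a sequence $\delta_m\downarrow 0$, write $\{t_i^m\}$ for the associated partition of $[0,T]_{\mathds{T}}$, and round a stopping time $S$ \emph{upward} by setting $S^m:=\min\{t_i^m:t_i^m\geq S\}$. Each $S^m$ takes finitely many values and is again a stopping time, since $\{S^m=t_i^m\}=\{S\leq t_i^m\}\setminus\{S\leq t_{i-1}^m\}\in\mathcal{F}_{t_i^m}$; moreover upward rounding is monotone, so $S_1\leq S_2$ gives $S_1^m\leq S_2^m$, while $S\leq S^m$ gives the nesting $\mathcal{F}_S\subseteq\mathcal{F}_{S^m}$.

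For the finitely valued stopping times $S_1^m\leq S_2^m$, the identity $E[X_{S_2^m}\mid\mathcal{F}_{S_1^m}]=X_{S_1^m}$ (with $\geq$ in the submartingale case) is precisely the discrete optional sampling theorem: one writes the increment as a telescoping sum of the increments $X_{t_i^m}-X_{t_{i-1}^m}$ over the partition nodes between $S_1^m$ and $S_2^m$ and uses the martingale (submartingale) property together with the $\mathcal{F}_{t_i^m}$-measurability of the events $\{S_1^m\leq t_i^m<S_2^m\}$. On the relevant events the index runs through the finite set $\{t_i^m\}$, so this step transfers verbatim from the classical discrete setting.

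The limiting step is arranged so as to use only the easy inclusion $\mathcal{F}_{S_1}\subseteq\mathcal{F}_{S_1^m}$, avoiding any need to identify $\bigcap_m\mathcal{F}_{S_1^m}$ with $\mathcal{F}_{S_1}$. Fixing $A\in\mathcal{F}_{S_1}$, we have $A\in\mathcal{F}_{S_1^m}$ for every $m$, so the discrete identity yields $E[X_{S_2^m}\mathbf{1}_A]=E[X_{S_1^m}\mathbf{1}_A]$ (resp. $\geq$). Because $X$ is closed on the right by the last element $X_T$, the families $\{X_{S_1^m}\}_m$ and $\{X_{S_2^m}\}_m$ are uniformly integrable: in the martingale case they are conditional expectations of the single integrable variable $X_T$, and in the submartingale case from the standard uniform-integrability estimate for a submartingale closed on the right by $X_T$, bounding $E|X_{S^m}|$ via $E[X_0]\leq E[X_{S^m}]\leq E[X_T]$. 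Combined with the almost sure convergence $X_{S_i^m}\to X_{S_i}$ discussed next, uniform integrability upgrades this to $L^1$ convergence, giving $E[X_{S_2}\mathbf{1}_A]=E[X_{S_1}\mathbf{1}_A]$ (resp. $\geq$) for every $A\in\mathcal{F}_{S_1}$; since $X_{S_1}$ is $\mathcal{F}_{S_1}$-measurable, this is exactly $E[X_{S_2}\mid\mathcal{F}_{S_1}]=X_{S_1}$ (resp. $\geq$).

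The main obstacle, and the only genuinely time-scale-specific point, is the almost sure convergence $X_{S^m}\to X_S$, since right-continuity of $X$ is not by itself meaningful at right-scattered points, where even the filtration fails to be right-continuous ($\mathcal{F}_t\neq\mathcal{F}_{t+}$). The resolution is the observation that the partition eventually absorbs every right-scattered point: if $t$ is right-scattered with $\mu(t)>0$, then as soon as $\delta_m\leq\mu(t)$ the node placed in the interval $(t_{i-1}^m,t_{i-1}^m+\delta_m]_{\mathds{T}}$ containing $t$ must be $t$ itself, because the next point $\sigma(t)=t+\mu(t)$ of $\mathds{T}$ lies beyond $t_{i-1}^m+\delta_m$, leaving no point of $\mathds{T}$ strictly between $t$ and that node. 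Hence on $\{S=t\}$ with $t$ right-scattered the rounding stabilizes, $S^m=t=S$ for all large $m$, so $X_{S^m}=X_S$ eventually; while on $\{S=t\}$ with $t$ right-dense we have $S^m\downarrow t$ through points of $\mathds{T}$ and ordinary right-continuity of $X$ gives $X_{S^m}\to X_S$. This confines the delicate limiting behaviour to the right-dense part of $\mathds{T}$, where the usual conditions behave classically, and the same discretization yields the $\mathcal{F}_{S_1}$-measurability of $X_{S_1}$ invoked above.
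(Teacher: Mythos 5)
Your proposal is correct and follows essentially the same route as the paper's proof: round $S_1\leq S_2$ upward along the time-scale partitions to finitely/countably valued stopping times, apply the discrete optional sampling theorem, use the inclusion $\mathcal{F}_{S_1}\subseteq\mathcal{F}_{S_1^m}$, and pass to the limit via uniform integrability and almost-sure convergence. Your justification of the time-scale-specific step $X_{S^m}\to X_S$ (partition nodes eventually absorbing right-scattered points, ordinary right-continuity at right-dense points) is in fact more explicit than the paper's, which merely asserts that convergence.
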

\begin{proof}
Let $[0,T]_{\mathds{T}}$ be a time scale, and let $\Pi_n=\{t_0,t_1,\cdots,t_n\}\subseteq [0,T]_{\mathds{T}}$ be a partition
(as the same in \cite{integration}) of $\mathds{T}$, where $0=t_0<t_1<t_2<\cdots<t_n=T$.
Consider the sequence of random times
\begin{equation}\label{ppp}
S_1^n(\omega;P_n)=\rho(t_{i+1})\quad if\quad  t_i\leq S_1(\omega)<t_{i+1},
\end{equation}
and the similarly defined sequences $\{S_2^n\}$.
These are stopping times.
For every fixed integer $n\geq 1$, both $S_1^n$ and $S_2^n$ take on a countable number of values and we also have $S_1^n\leq S_2^n$.
Therefore, by the discrete optional sampling theorem we have $\int_A X_{S_1^n}dP\leq \int_A X_{S_2^n}dP$ for every $A\in \mathcal{F}_{S_1^n}$.
$S_1\leq S_1^n$ implies $\mathcal{F}_{S_1}\subset \mathcal{F}_{S_1^n}$, the preceding inequality also holds for every $A\in\mathcal{F}_{S_1}$.

The discrete martingale results show that the sequence of random variables $\{X_{S_1^n}\}$ is uniformly integrable, and the same is of course true for $\{X_{S_2^n}\}$.
$X_{S_2}=\lim_{n\rightarrow\infty}X_{S_2^n}(\omega)$ and $X_{S_1}=\lim_{n\rightarrow\infty}X_{S_1^n}(\omega)$ hold for a.e. $\omega \in\Omega$.
It follows from uniform integrability that $X_{S_1}$, $X_{S_2}$ are integrable, and that $\int_AX_{S_1}dP\leq \int_AX_{S_2}dP$ holds for every $A\in \mathcal{F}_{S_1}$.
\end{proof}

Via the identity $M_t=E[M_T|\mathcal{F}_t]$, each $M\in \mathcal{M}_{\mathds{T}}^{2}$ can be identified with its terminal value $M_T\in L^{2}(\Omega,\mathcal{F}_T,\mathbf{P})$
(in general the terminal variables can be extended $M_{\infty}$ if exists).
$\mathcal{M}_{\mathds{T}}^{2}$ becomes a Hilbert space isomorphic to $L^{2}(\Omega,\mathcal{F}_T,\mathbf{P})$ , if endowed with the inner product
$$(M,N)_{\mathcal{M}^2_{\mathds{T}}}:=E[M_TN_T],\quad \|M\|_{\mathcal{M}^2_{\mathds{T}}}=\|M_T\|_{L^2}.$$
Indeed, if $(M^n)$ is a Cauchy sequence for $\|\cdot\|_{\mathcal{M}^2_{\mathds{T}}}$,
then the sequence $(M^n_T)$ is Cauchy in $L^2(\Omega,\mathcal{F}_{T},\mathbf{P})$ and so goes to a limit $M_T$ in this space;
then if $M$ is the martingale with terminal variable $M_T$, it belongs to $\mathcal{M}^2_{\mathds{T}}$ and $\|M^n-M\|\rightarrow 0$.
%By Doob's inequality, we have
%\begin{lemma}
%If $M^n$ converges to $M$ in $\mathcal{M}^2_{\mathds{T}}$, then $\sup_{s\in\mathds{T}}|M_s^n-M_s|\rightarrow 0$ in $L^2(\Omega,\mathcal{F}_{T},\mathbf{P})$.
%\end{lemma}
%Following the lemma, we can have
%\begin{lemma}
The set of all continuous elements of on time scales $\mathcal{M}^2_{\mathds{T}}$ denoted by $\mathcal{M}^{2,c}_{\mathds{T}}$, is a closed subspace of
the Hilbert space $\mathcal{M}^2_{\mathds{T}}$.

In this part, we study some properties of the spaces $\mathcal{M}^{2}_{\mathds{T},k}$ on time scales.
We may define a measure $\mu_p$ on
$(\mathds{T}\times \Omega,\mathcal{B}(\mathds{T})\bigotimes\mathcal{F}_{\rho(t)})$ by
$$\mu_p(A)=E(\int_{\mathds{T}}1_A(s,\omega)\nabla \langle W\rangle_s).$$

We recall $\mathcal{L}_{\mathds{T},k\times d}^2$
the space of all $\mathbf{R}^{k\times d}$-valued, predictable processes $\phi=\{\phi_t\}_{t\in \mathds{T}}$ satisfying
$$\|\phi\|^2_{t,W}=E\int_0^t|\phi_s|^2\nabla s <\infty.$$
Similar to Lemma 2.2 chapter 3 in Karatzas's book \cite{GTM}, we have
\begin{lemma}
The $\mathcal{L}^2_{\mathds{T},k\times d}$ space is a closed space with the norm $\|\cdot\|_{T,W}$.
\end{lemma}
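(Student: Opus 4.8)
The plan is to recognize $\mathcal{L}^2_{\mathds{T},k\times d}$ as an $L^2$-space over the measure $\mu_p$ introduced just above and to invoke the Riesz--Fischer completeness of such spaces together with the stability of predictability under almost-everywhere limits. First I would observe that, since $\langle W\rangle_s = s$ on time scales, the norm satisfies
$$\|\phi\|^2_{T,W}=E\int_0^T|\phi_s|^2\nabla s=\int_{(0,T]_{\mathds{T}}\times\Omega}|\phi|^2\,d\mu_p,$$
so that $\|\cdot\|_{T,W}$ is exactly the $L^2(\mu_p)$-norm restricted to predictable processes. Thus $\mathcal{L}^2_{\mathds{T},k\times d}$ is, by construction, a linear subspace of the Hilbert space $L^2\bigl((0,T]_{\mathds{T}}\times\Omega,\mu_p;\mathbf{R}^{k\times d}\bigr)$, and proving it is closed amounts to proving completeness of this subspace.

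Next I would take a Cauchy sequence $\{\phi^n\}\subset\mathcal{L}^2_{\mathds{T},k\times d}$. Because $L^2(\mu_p)$ is complete by the Riesz--Fischer theorem, there is a limit $\phi\in L^2(\mu_p)$ with $\|\phi^n-\phi\|_{T,W}\to 0$. The only remaining point is that $\phi$ admits a predictable version, i.e. that the subspace of predictable processes is itself closed. To see this I would extract a subsequence $\{\phi^{n_k}\}$ converging to $\phi$ for $\mu_p$-almost every $(s,\omega)$; such a subsequence exists precisely because norm convergence in $L^2$ forces almost-everywhere convergence along a subsequence. Setting $\tilde\phi=\lim_k \phi^{n_k}$ on the set where this limit exists and $\tilde\phi=0$ elsewhere yields a process equal $\mu_p$-almost everywhere to $\phi$, hence representing the same class in $L^2(\mu_p)$.

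Finally I would establish predictability of $\tilde\phi$. Each $\phi^{n_k}$ is predictable, the set $\{(s,\omega):\lim_k\phi^{n_k}\text{ exists}\}$ is predictable, and a pointwise limit of predictable processes is predictable because the predictable $\sigma$-algebra is genuinely a $\sigma$-algebra, closed under such limits; modifying $\tilde\phi$ on the $\mu_p$-null complement does not disturb this once that complement is chosen measurable for the completed predictable $\sigma$-algebra. Hence $\tilde\phi$ is predictable and square-integrable, so $\tilde\phi\in\mathcal{L}^2_{\mathds{T},k\times d}$ and $\phi^n\to\tilde\phi$ in $\|\cdot\|_{T,W}$, which gives closedness.

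I expect the predictability step to be the main obstacle: on a general time scale one must verify that the relevant predictable $\sigma$-algebra — with the subtlety that $\mathcal{F}_t\neq\mathcal{F}_{t+}$ at right-scattered points, so that left-continuity and the conditioning $\sigma$-field $\mathcal{F}_{\rho(t)}$ must be handled correctly — is indeed closed under almost-everywhere limits, and that the exceptional null set can be taken predictable. Everything else, namely the identification with $L^2(\mu_p)$ and the appeal to Riesz--Fischer, is routine once the measure $\mu_p$ on $(\mathds{T}\times\Omega,\mathcal{B}(\mathds{T})\otimes\mathcal{F}_{\rho(t)})$ is in hand.
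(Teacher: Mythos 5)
Your proposal is correct and follows essentially the same route as the paper: both embed $\mathcal{L}^2_{\mathds{T},k\times d}$ into the Hilbert space $L^2(\mu_p)$, invoke its completeness, extract a $\mu_p$-almost-everywhere convergent subsequence, and deduce that the limit is predictable. Your handling of the predictability step (closure of the predictable $\sigma$-algebra under pointwise limits, with the limit redefined as zero where it fails to exist) is if anything cleaner than the paper's argument via restriction to $[0,t]_{\mathds{T}}$ and measurability with respect to $\mathcal{B}([0,t]_{\mathds{T}})\otimes\mathcal{F}_{\rho(t)}$, but it is the same idea.
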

\begin{proof}
We define a Hilbert space
$$\mathcal{H}_T=L^2(\mathds{T}\times \Omega,\mathcal{B}(\mathds{T})\otimes\mathcal{F}_{\rho(T)},\mu_p)$$
Obviously, $\mathcal{L}^2_{\mathds{T},k\times d}$ is a subspace of $\mathcal{H}_T$.
Now we prove that it is closed.
Suppose that $\{X^m\}_{m=1}^{\infty}$ is a convergent sequence in $\mathcal{L}^2_{\mathds{T},k\times d}$
with limit $X\in \mathcal{H}_T$.
Thus the sequence has a convergent subsequence which converges almost surely under $\mu_p$, also denoted by $\{X^m\}_{m=1}^{\infty}$.
Therefore
$$\mu_p\{(t,\omega):\lim_{n\rightarrow \infty}X_t^m(\omega)\neq X_t(\omega);\quad t\in \mathds{T}\}=0,$$
$X\in\mathcal{H}_T$, thus $X$ is $\mathcal{B}(\mathds{T})\otimes\mathcal{F}_{\rho(T)}$-measurable.

Restricted on $[0,t]_{\mathds{T}}$ for $0<t\leq T$, repeating the above procedure,
by the uniqueness of convergence, we can get that $X$ is $\mathcal{B}([0,t]_{\mathds{T}})\otimes\mathcal{F}_{\rho(t)}$-measurable.
Therefore $X$ is predictable and belongs to $\mathcal{L}^2_{\mathds{T},k\times d}$.
The proof is complete.
\end{proof}

We can define an inner product on $\mathcal{L}_{\mathds{T},k\times d}^2$ by
$(X,Y)_1=E\int_0^tX_{s}Y_{s}\nabla s$.
Recall the inner product on $L^{2}(\Omega,\mathcal{F}_T,\mathbf{P};\mathbf{R}^k)$ (simply denoted by $L^2_k(\mathcal{F}_T)$), $(I_1,I_2)_2=E[I_1I_2]$.
Now we consider the mapping $X\mapsto I_T(X)$ from $\mathcal{L}_{\mathds{T},k\times d}^2$ to $L^{2}_k(\mathcal{F}_T)$.
This mapping preserves inner products:
$$(X,Y)_1=E\int_0^TX_tY_t \nabla t=E[I_T(X)I_T(Y)]=(I_T(X),I_T(Y))_2.$$
Denote $\mathcal{R}_k(\mathcal{F}_T)\triangleq \{I_T(X);X\in\mathcal{L}_{\mathds{T},k\times d}^2\}$.
Since $\mathcal{L}_{\mathds{T},k\times d}^2$ is closed, any convergent sequence in $\mathcal{R}_k(\mathcal{F}_T)$
is also Cauchy, its preimage sequence in $\mathcal{L}_{\mathds{T},k\times d}^2$ must have a limit in $\mathcal{L}_{\mathds{T},k\times d}^2$.
It follows that $\mathcal{R}_k(\mathcal{F}_T)$ is closed in $L^2_k(\mathcal{F}_T)$.
Let us denote by $\mathcal{M}^{2,*}_{\mathds{T}}$ the subset of $\mathcal{M}^{2,c}_{\mathds{T},k}$ which consists of stochastic integrals
$$I_t(X)=\int_0^tX_s\nabla W_s; \quad 0\leq t\leq T,\quad t\in \mathds{T},$$
of processes $X\in\mathcal{L}_{\mathds{T},k\times d}^2$:
\begin{equation}
\mathcal{M}^{2,*}_{\mathds{T},k}\triangleq\{I(X);X\in\mathcal{L}_{\mathds{T},k\times d}^2\}\subseteq \mathcal{M}^{2,c}_{\mathds{T},k}\subseteq \mathcal{M}^{2}_{\mathds{T},k}.
\end{equation}

Now we state the "fundamental decomposition theorem" for the martingales w.r.t Brownian motion on time scales.

\begin{theorem}\label{multidecomtheorem}
For every $M\in \mathcal{M}_{\mathds{T},k}^{2}$, with $M_0=0,~\mathbf{P}-a.s.$, we have the decomposition
$$M_t=I_t(X)+N_t, \quad \forall t\in\mathds{T},$$
where $X\in\mathcal{L}^2_{\mathds{T},k\times d}$, $I_t(X)\in \mathcal{M}_{\mathds{T},k}^{2,*},~N\in \mathcal{M}_{\mathds{T},k}^{2}$ with $N_0=0$
and $N$ is orthogonal to every element of $\mathcal{M}_{\mathds{T},k}^{2,*}$.
\end{theorem}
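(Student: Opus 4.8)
The plan is to reduce the statement to the orthogonal projection theorem in a Hilbert space, exploiting the two structural facts already assembled in the excerpt. First, the map $M\mapsto M_T$ is an isometric isomorphism of $\mathcal{M}^2_{\mathds{T},k}$ onto $L^2_k(\mathcal{F}_T)$ for the inner product $(M,N)_{\mathcal{M}^2_{\mathds{T}}}=E[M_T N_T]$. Second, the set $\mathcal{R}_k(\mathcal{F}_T)=\{I_T(X):X\in\mathcal{L}^2_{\mathds{T},k\times d}\}$ is a \emph{closed} subspace of $L^2_k(\mathcal{F}_T)$ and $X\mapsto I_T(X)$ is an isometry. Under the first isomorphism the family $\mathcal{M}^{2,*}_{\mathds{T},k}=\{I(X)\}$ maps precisely onto $\mathcal{R}_k(\mathcal{F}_T)$, so $\mathcal{M}^{2,*}_{\mathds{T},k}$ is itself a closed subspace; the whole problem thus becomes an orthogonal decomposition of $M_T$ against $\mathcal{R}_k(\mathcal{F}_T)$.

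First I would transfer the problem to $L^2_k(\mathcal{F}_T)$: since $M_T\in L^2_k(\mathcal{F}_T)$ and $\mathcal{R}_k(\mathcal{F}_T)$ is closed, the projection theorem yields a unique decomposition $M_T=I_T(X)+\zeta$ with $I_T(X)\in\mathcal{R}_k(\mathcal{F}_T)$ and $\zeta\in\mathcal{R}_k(\mathcal{F}_T)^{\perp}$, i.e. $E[\zeta\,I_T(Y)]=0$ for every $Y\in\mathcal{L}^2_{\mathds{T},k\times d}$. This simultaneously produces the integrand $X$ and already furnishes uniqueness of the decomposition, which is a free by-product not explicitly claimed in the statement.

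Next I would lift this back to the level of martingales. Set $N_t:=E[\zeta\mid\mathcal{F}_t]$, so that $N\in\mathcal{M}^2_{\mathds{T},k}$ has terminal value $\zeta$. Because both $M$ and $I(X)$ are $\mathbf{F}$-martingales, applying $E[\,\cdot\mid\mathcal{F}_t]$ to $M_T=I_T(X)+\zeta$ gives $M_t=I_t(X)+N_t$ for all $t\in\mathds{T}$. At $t=0$ the integral vanishes, $I_0(X)=0$, and $M_0=0$ by hypothesis, whence $N_0=0$. The orthogonality asserted in the statement is then immediate: for any $L=I(Y)\in\mathcal{M}^{2,*}_{\mathds{T},k}$ we have $(N,L)_{\mathcal{M}^2_{\mathds{T}}}=E[N_T L_T]=E[\zeta\,I_T(Y)]=0$. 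This completes the decomposition in the Hilbert-space (terminal-value) sense of orthogonality.

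The part I expect to need the most care is exactly the \emph{notion} of orthogonality. The projection argument delivers orthogonality only at the terminal time, whereas the natural requirement for the BS$\nabla$E application is the stronger \emph{strong orthogonality} $\langle N,I(Y)\rangle\equiv 0$, equivalently that $N\,I(Y)$ is an $\mathbf{F}$-martingale. To upgrade I would apply the terminal orthogonality to the localized integrands $Y\mathbf{1}_{(s,t]}\mathbf{1}_A$ with $A\in\mathcal{F}_{\rho(s)}$, use that $\mathcal{M}^{2,*}_{\mathds{T},k}$ is stable under such stopping, and invoke the Optional Sampling Theorem on time scales (Lemma~\ref{OptionalSampling}) to deduce that the predictable covariation vanishes. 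Here the genuine time-scale subtleties enter: predictability is referenced to $\mathcal{F}_{\rho(t)}$, and because $\mathcal{F}_t\neq\mathcal{F}_{t+}$ at right-scattered points one must track the conventions $f(t-),f(t+)$ and apply the covariation formula for the Brownian martingales $M^i$ established earlier at the scattered points, so that the stopped integrands remain in $\mathcal{L}^2_{\mathds{T},k\times d}$ and the identity holds without error.
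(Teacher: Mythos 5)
Your proposal is correct and follows essentially the same route as the paper's proof: orthogonal projection of $M_T$ onto the closed subspace $\mathcal{R}_k(\mathcal{F}_T)\subseteq L^2_k(\mathcal{F}_T)$, lifting back to all of $\mathds{T}$ via $N_t=E[N_T\mid\mathcal{F}_t]$ and conditional expectations, and then upgrading terminal orthogonality to strong orthogonality by localizing the integrand and applying the Optional Sampling Theorem (Lemma~\ref{OptionalSampling}); the paper localizes with $\tilde X_t = X_t 1_{\{t\le S\}}$ for stopping times $S$, while you localize with $Y\,1_{(s,t]}1_A$, which is the same device. The only detail to adjust is that the sets $A$ should range over $\mathcal{F}_s$ rather than $\mathcal{F}_{\rho(s)}$, since testing only against $\mathcal{F}_{\rho(s)}$ yields a strictly weaker conditional identity at left-scattered $s$, whereas $A\in\mathcal{F}_s$ still keeps $Y\,1_{(s,t]}1_A$ predictable because $\rho(u)\ge s$ for every $u\in(s,t]_{\mathds{T}}$.
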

%\begin{theorem}\label{decomtheorem}
%For every $M\in \mathcal{M}_{\mathds{T}}^{2}$, with $M_0=0,a.s.~\mathbf{P}$, we have the decomposition
%$$M_t=I_t(X)+N_t, \quad \forall t\in\mathds{T},$$
%where $X\in\mathcal{L}^2_{\mathds{T}}$, $I_t(X)\in \mathcal{M}_{\mathds{T}}^{2,*},N\in \mathcal{M}_{\mathds{T}}^{2}$,
%with $N_0=0$ and $N$ is orthogonal to every element of $\mathcal{M}_{\mathds{T}}^{2,*}$.
%\end{theorem}
\begin{proof}
\textbf{Uniqueness}:
Suppose that there exists a process $Y\in \mathcal{L}_{\mathds{T},k\times d}^2$ such that $M_t=I_t(Y)+N_t$, where $N\in \mathcal{M}_{\mathds{T},k}^{2}$ has the property
$$\langle I(X),N\rangle_t=0,\quad \forall X\in \mathcal{L}_{\mathds{T},k\times d}^2.$$
Such a decomposition is unique (up to indistinguishability);
indeed, if we have $M=I(Y')+N'=I(Y'')+N''$ with $Y',Y''\in \mathcal{L}_{\mathds{T},k\times d}^2$ and both $N'$ and $N''$ satisfy the property,
then
$$Z\triangleq N''-N'=I(Y'-Y'')$$
is in $\mathcal{M}_{\mathds{T},k}^{2}$ with $Z_0=0$ and $\langle Z\rangle=\langle Z,I(Y'-Y'')\rangle=0$.
Then $P[Z_t=0,\forall t\in \mathds{T}]=1$ from the Optional Sampling Theorem.
(Similar results as Problem1.5.12 in Karatzas\cite{GTM}, applying optional sampling theorem to the martingale $Z^2-\langle Z\rangle$.)
The decomposition is unique up to indistinguishability.

\textbf{Existence}:

Since $\mathcal{R}_k(\mathcal{F}_T)$ is a closed subspace of $L^2_k(\mathcal{F}_T)$,
we can denote its orthogonal complement by $\mathcal{R}_k^{\perp}(\mathcal{F}_T)$.
The random variable $M_T$ is in $L^2_k(\mathcal{F}_T)$, so it admits the decomposition
\begin{equation}\label{de}
M_T=I_T(Y)+N_T,
\end{equation}
where $Y\in\mathcal{L}_{\mathds{T},k\times d}^2$ and $N_T\in L^2_k(\mathcal{F}_T)$ satisfies
$$E\{N_TI_T(X)\}=0;\quad \forall X\in\mathcal{L}_{\mathds{T},k\times d}^2.$$
We construct a martingale through $N_T$ by
%Let us denote by $N=\{N_t,\mathcal{F}_t,t\in\mathds{T}\}$ a right-continuous version of the martingale
$N_t=E(N_T|\mathcal{F}_t)$.
Obviously $N\in\mathcal{M}_{\mathds{T}}^2$.
Taking conditional expectation under $\mathcal{F}_t$ on $M_T$, we obtain
$$M_t=I_t(Y)+N_t.$$
It remains to show that $N$ is orthogonal to every square-integrable martingale of the form
$\{I(X);X\in\mathcal{L}_{\mathds{T},k\times d}^2\}$,
or equivalently, that $\{N_tI_t(X),\mathcal{F}_t\}_{t\in\mathds{T}}$ is a martingale.
Note that each martingale has a right continuous modification.
So now we suppose that $N$ is right continuous.

According to lemma \ref{OptionalSampling},
we only need to prove that $E[N_SI_S(X)]=E[N_0I_0(X)]=0$ holds for every stoping time $S$ of the filtration $\{\mathcal{F}_t\}_{t\in \mathds{T}}$, with $S<T$ (since $I_0(X)=0$).
The integral has $I_S(X)=I_{T}(\tilde{X})$, where $\tilde{X}_t(\omega)=X_t(\omega)1_{\{t\leq S(\omega)\}}$
is a process in $\mathcal{L}_{\mathds{T},k\times d}^2$.
Therefore, by the Optional Sampling Theorem, we have
$$E[N_SI_S(X)]=E[E(N_T|\mathcal{F}_{S})I_S(X)]=E[N_TI_T(\tilde{X})]=0.$$
The proof is complete.
\end{proof}

\begin{remark}
\begin{enumerate}
\item Note $f(t)\neq f(t-)$ at left-scattered points for continuous function on time scales, $
N\in \mathcal{M}_{\mathds{T},k}^{2,c}$ is continuous, but $\nabla N_t\neq 0$ at left-scattered points.
\item Let $\mathds{T}=\mathbf{R},k=1$ (Karatzas \cite{GTM}, Proposition 4.14 one-dimension decomposition)
$\mathcal{M}_{\mathbf{R}}^{2,c}$ and $\mathbf{M}_{\mathbf{R}}^{2,*}$ are actually coincide, the component $N_t$ in the decomposition is actually $\nabla N_t=0$.
The predictable process space is isomorphic to the adapted process space \cite{GTM2}.
\item For Brownian motion on general time scales, even on the augmentation filtration of the filtration generated by $W$,
$\mathcal{M}_{\mathds{T},k}^{2,c}$ and $\mathcal{M}_{\mathds{T},k}^{2,*}$ do not coincide, see the following example.
\item Let $\mathds{T}=\mathds{N}$ (Follmer, Hans, \cite{discreteKW}Theorem 10.18),
 it is the discrete time version of the Kunita-Watanabe decomposition w.r.t a sequence of normal distribution random variables.
\end{enumerate}
\end{remark}
%Since $\mathcal{M}_{\mathds{T},k}^{2,c}$ is closed,
%let $\mathcal{M}_{\mathds{T},k}^{2,\perp}\subseteq\mathcal{M}_{\mathds{T},k}^{2,c}$,
%denote the set of all the $k$-dimensional martingales,
%such that each martingale is orthogonal to that in $\mathcal{M}_{\mathds{T},k}^{2,*}$.
%Apparently $\mathcal{M}_{\mathds{T},k}^{2,\perp}$ is a closed subspace in $\mathcal{M}_{\mathds{T},k}^{2,c}$.
\begin{example}
For a martingale $B_t^2-t$ on time scales $\mathds{T}=[0,1]\cup\{3,4,5\}$, we have
$$B_t^2-t=(B_t-B_s)^2-(t-s)+2B_s(B_t-B_s)+B_s^2-s,$$
so for $(s,t)_{\mathds{T}}\bigcap \mathbf{R}=\emptyset$,
\begin{itemize}
\item
$N_t=(B_t-B_s)^2-(t-s)$ is the orthogonal martingale part w.r.t. Brownian interal.
\item  $I_t=2B_s(B_t-B_s)$ is the It\^{o} integral part w.r.t Brownian motion at left-scattered point. In this case $N_t$ is orthogonal to $I_t$.
\end{itemize}
\end{example}

%We have the following theorem.
%\begin{theorem}
%For a martingale $N\in \mathcal{M}_{\mathds{T},n}^2,N\in \mathcal{M}_{\mathds{T},n}^{2,\perp}$ if and only if it satisfies:
%\begin{itemize}
%\item For each left scattered point $t\in\mathds{T}$,$E[(N_t-N_{t-})(W_t-W_{t-})|\mathcal{F}_{t-}]=0$;
%\item if $[t_1,t_2]_{\mathbf{R}}\subseteq \mathds{T}$, for each $t\in[t_1,t_2]_{\mathds{T}}$,$N_t=N_{t_1},a.s.$
%\end{itemize}
%\end{theorem}

\section{BS$\nabla$E driven by Brownian Motion on time scales}
%We denote by $\mathbf{R}^k$ the k-dimensional Euclidean space,
%equipped with the standard inner product
%$(\cdot ,\cdot)$, and the Euclidean norm $|\cdot|$.

%Consider a bounded time scale $\mathds{T}$ with $T\in \mathds{T}$ being the upper bound, for simplicity, we write $[0,T]_{\mathds{T}}$.
Denote $(\Omega,\mathcal{F},\mathbf{P})$ be a probability space equipped with a complete filtration $\mathcal{F}=(\mathcal{F}_t)_{t\in\mathds{T}}$
generated by a d-dimensional Brownian Motion $W_t$ on time scales, and augmented by all the $\mathbf{P}$-null sets in $\mathcal{F}$.
$E[X_t|\mathcal{F}_s]$ or $E^{\mathcal{F}_s}$ denotes the conditional expectation with respect to the filtration $\mathcal{F}_s$.

For simplicity, we consider the general BSDE on time scales
\begin{equation}\label{bsde}
\left\{
\begin{array}{ll}
Y_t=Y_T+\int_t^Tg(u,Y_{u-},Z_u)\nabla u-\int_t^TZ_u\nabla W_u-(N_T-N_t),\\
Y_T=\xi.\\
\end{array}
\right.
\end{equation}
We call $g$ the driver of the BSDE on time scales and the pair $(\xi,g)$ the data of the BSDE on time scales.
\begin{itemize}
\item $L^2_k(\mathcal{F}_t)$,
 short for $L^2(\Omega,\mathcal{F}_t,\mathbf{P};R^k)$,
 the space of $\mathbf{R}^{k}$-valued random vectors $\xi$ that are $\mathcal{F}_t$-measurable and satisfy $E[\|\xi\|^2]<\infty$,
\item $\mathcal{S}^2_{\mathds{T}}$, short for $\mathcal{S}^2_{\mathds{T}}(0,T;\mathbf{R}^{k})$,
     the set of
    $\mathbf{R}^{k}$-valued, adapted and continuous processes $(\phi_t)_{t\in [0,T]_{\mathds{T}}}$ such that $E[sup_{t\in[0,T]_{\mathds{T}}}|\phi_t|^2]<\infty,$
\item $M^2_{\mathds{T}}$, short for $M^2_{\mathds{T}}(0,T;\mathbf{R}^{k\times d})$,
the set of $\mathbf{R}^{k\times d}$-valued, $\mathcal{F}_t$-progressively measurable, predictable processes $(\phi_t)_{t\in[0,T]_{\mathds{T}}}$
      such that
    $$E\{\int_0^T|\phi_t|^2\nabla t\}<\infty,$$
\item $\mathcal{M}_{\mathds{T},k}^{2,\perp}$-subset of $\mathcal{M}_{\mathds{T},k}^{2,c}$,
the set of all the $k$-dimensional martingales, such that each martingale is orthogonal to that in $\mathcal{M}_{\mathds{T},k}^{2,*}$,
\end{itemize}
for any integer $k$.

Obviously, $\mathcal{S}^2_{\mathds{T}}$ is a Banach space and
$M^2_{\mathds{T}}$
is a Hilbert space.
A solution to a BSDE on time scales is a triple of process $(Y, Z, N)$ satisfying the above equation, such that $Y$ is $\mathbf{R}^k$-valued,
%c$\grave{a}$dl$\grave{a}$g
continuous and adapted, $Z$ is $\mathbf{R}^{k\times d}$-valued and predictable, $N$ is a martingale process strongly orthogonal to $W$.
For terminal condition $\xi$, and generator $g$, we make the following assumptions:

\textbf{Assumption(H):}
\begin{itemize}
\item (H1) $g$ is defined as $\Omega\times \mathds{T}\times \mathbf{R}^k\times\mathbf{R}^{k\times d}\rightarrow \mathbf{R}^k$, such that
$\forall(y,z)\in \mathbf{R}^k\times  \mathbf{R}^{k\times d}$, $g(\cdot,y,z)$ is $\mathcal{F}_t$ progressively measurable.
The deterministic integral part could also be $g(\omega,t,Y_{t},Z_t)$ or $g(\omega,t-,Y_{t-},Z_{t-})$.
For simplicity, we only consider the $g(\cdot,Y_{t-},Z_t)$ case, which corresponding to the implicit discrete backward difference equation (\ref{discretebsde1}).
\item (H2)
$$
 \int_0^T|g(\cdot ,0,0)|^2\nabla t\in L^2_k(\mathcal{F}_T).
$$
\item (H3)
we also assume $g$ satisfies Lipschitz condition w.r.t. $(y,z)$:
there exists a constant $L>0$, $\forall y,y'\in \mathbf{R}^k$, $z,z'\in \mathbf{R}^{k\times d}$
$$
 |g(t,y,z)-g(t,y',z')|\leq L(|y-y'|+|z-z'|).
$$
\item  (H4) $\xi \in L^2_k(\mathcal{F}_T)$
\end{itemize}
for any inter $k$.

\begin{theorem}\textbf{(Main Result)}\label{theorem}
Assuming (H) is satisfied,
BS$\nabla$E(\ref{bsde}) admits a unique triple of solution
$(Y, Z, N)\in
\mathcal{S}^2_{\mathds{T}}\times M^2_{\mathds{T}}\times\mathcal{M}_{\mathds{T},k}^{2,\perp}$.
\end{theorem}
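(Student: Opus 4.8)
The plan is to obtain the solution as the unique fixed point of a contraction on the product space $\mathcal{H}^2 := \mathcal{S}^2_{\mathds{T}} \times M^2_{\mathds{T}} \times \mathcal{M}^{2,\perp}_{\mathds{T},k}$, using the martingale decomposition Theorem \ref{multidecomtheorem} in the role that the predictable representation property plays in the Brownian case. Given an input triple $(y,z,n)\in\mathcal{H}^2$, Assumptions (H2)--(H4) together with the Lipschitz bound (H3) guarantee that $\xi+\int_0^T g(u,y_{u-},z_u)\nabla u\in L^2_k(\mathcal{F}_T)$, so that
$$\bar{M}_t := E\Big[\xi+\int_0^T g(u,y_{u-},z_u)\nabla u \,\Big|\, \mathcal{F}_t\Big]$$
defines an element of $\mathcal{M}^2_{\mathds{T},k}$. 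Applying Theorem \ref{multidecomtheorem} to $\bar{M}-\bar{M}_0$ produces a unique $Z\in\mathcal{L}^2_{\mathds{T},k\times d}$ and an orthogonal martingale $N\in\mathcal{M}^{2,\perp}_{\mathds{T},k}$ with $N_0=0$ satisfying $\bar{M}_t=\bar{M}_0+\int_0^t Z_u\nabla W_u+N_t$. Setting
$$Y_t := \bar{M}_t-\int_0^t g(u,y_{u-},z_u)\nabla u = E\Big[\xi+\int_t^T g(u,y_{u-},z_u)\nabla u \,\Big|\, \mathcal{F}_t\Big]$$
and defining $\Phi(y,z,n):=(Y,Z,N)$, a direct computation gives $Y_t=Y_T-\int_t^T Z_u\nabla W_u-(N_T-N_t)+\int_t^T g\,\nabla u$, so that any fixed point of $\Phi$ solves (\ref{bsde}) and, conversely, every solution is a fixed point.

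Next I would verify that $\Phi$ maps $\mathcal{H}^2$ into itself. The conditional-expectation form of $Y$ combined with Doob's maximal inequality on time scales yields $E[\sup_{t\in[0,T]_{\mathds{T}}}|Y_t|^2]<\infty$ and the required continuity, placing $Y$ in $\mathcal{S}^2_{\mathds{T}}$; the isometry $E|I_T(Z)|^2=E\int_0^T|Z_u|^2\nabla u$ established in Section 3 places $Z$ in $M^2_{\mathds{T}}$, and the decomposition theorem already delivers $N\in\mathcal{M}^{2,\perp}_{\mathds{T},k}$.

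The core of the argument is the contraction estimate. For two inputs I write $\delta y=y^1-y^2$, $\delta z=z^1-z^2$, $\delta g_u=g(u,y^1_{u-},z^1_u)-g(u,y^2_{u-},z^2_u)$, and analogously $\delta Y,\delta Z,\delta N$ for the outputs, and I introduce the equivalent weighted norm obtained by inserting a factor $e^{\beta t}$ (the time-scale exponential $e_\beta(\cdot,0)$ being the natural choice) with $\beta>0$ to be tuned. Applying the It\^o formula on time scales from \cite{first, girsanov} to $e^{\beta t}|\delta Y_t|^2$ and taking expectations, the integrals against $W$ and against $\delta N$ vanish in expectation, while the orthogonality $\langle I(\delta Z),\delta N\rangle=0$ furnished by Theorem \ref{multidecomtheorem} decouples the quadratic-variation contribution into $E\int_0^T e^{\beta u}|\delta Z_u|^2\nabla u+E\int_0^T e^{\beta u}\,\nabla\langle\delta N\rangle_u$. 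Bounding the cross term $2\,\delta Y_{u-}\delta g_u$ by (H3) and the elementary inequality $2ab\le \tfrac{1}{\alpha}a^2+\alpha b^2$, one reaches, for $\beta$ chosen large enough, a strict contraction of $\Phi$ on $\mathcal{H}^2$ in the weighted norm; the Banach fixed-point theorem then yields existence and uniqueness at once.

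The step I expect to be the main obstacle is the bookkeeping of the jump contributions in the time-scale It\^o formula at right-scattered points. In contrast to the purely continuous case, $\nabla(\delta N)_u$ and the graininess-weighted second-order corrections need not vanish at left-scattered points (cf. the Remark following Theorem \ref{multidecomtheorem}), so the quadratic-variation term must be separated into its continuous ($\nabla u$) part and its atomic part, and I must confirm that the cross term between the Brownian integral and $N$ cancels in expectation rather than merely through its continuous characteristic $\langle\cdot\rangle$. A secondary technical point is ensuring that the contraction constant is uniform in the graininess of $\mathds{T}$, so that the fixed point genuinely lands in $\mathcal{S}^2_{\mathds{T}}$ with the sup-integrability; this rests on the time-scale Doob inequality, which I would isolate as a lemma if it is not already available from the submartingale theory of Section 3.
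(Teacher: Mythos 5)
Your proposal is correct and follows essentially the same route as the paper: your map $\Phi$ is exactly the paper's operator $I$, constructed via conditional expectation plus the martingale decomposition Theorem \ref{multidecomtheorem} (this is precisely the paper's Lemma \ref{bsdelemma}), and the contraction is obtained, as in the paper, by applying the time-scale It\^o formula to $e_{\beta}(\cdot,0)|\delta Y|^2$, exploiting the orthogonality of $N$, the Lipschitz condition, and a sufficiently large $\beta$ before invoking Banach's fixed-point theorem. The only difference is organizational: the paper isolates the weighted a priori estimate (\ref{bsdeinequality}) as a separate lemma and then applies it to differences, whereas you run the It\^o computation directly on $\delta Y$, and the jump contributions at scattered points that you flag as the main obstacle are handled in the paper by the correction sum $S$, which vanishes identically.
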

\begin{remark}
$\mathds{T}=\mathbf{R}$, Theorem \ref{theorem} degenerates to the classical BSDE.
\end{remark}
For simplicity, we first consider $g$ real-valued and not contain $(y,z)$.
\begin{lemma}\label{bsdelemma}
For a fixed $\xi \in L^2_k(\mathcal{F}_T)$ and $g_0({\cdot}):\Omega\times \mathds{T}\rightarrow \mathbf{R}^k$ is a progressively measurable
and $\mathcal{F}_t$-predictable process which satisfies
$$E\int_0^T|g_0(s)|^2\nabla s<\infty.$$
Then for BSDE(\ref{bsde}) with parameters $\xi$ and $g_0$,
there exists a triple solution $(Y,Z,N)\in \mathcal{S}^2_{\mathds{T}}\times M^2_{\mathds{T}}\times \mathcal{M}_{\mathds{T},k}^{2,\perp}$.
Moreover,
\begin{equation}\label{bsdeinequality}
\begin{split}
&|Y_t|^2+E^{\mathcal{F}_t}\int_t^T\{\frac{\beta}{2} |Y_{s-}|^2+|Z_s|^2\}e_{\beta}(s,t)\nabla s+\int_t^Te_{\beta}(s,t)\nabla[N_s ]\\
\leq & E^{\mathcal{F}_t}|\xi|^2e_{\beta}(T,t)+\frac{2}{\beta}E^{\mathcal{F}_t}\int_t^T|g_0(s)|^2e_{\beta}(s,t)\nabla s.\\
\end{split}
\end{equation}
%especially,
%\begin{equation}
%\begin{split}
%&|Y_0|^2+E\int_0^T[\frac{\beta}{2} |Y_{s-}|^2+|Z_s|^2]e_{\beta} (s,0)\nabla s+\int_0^T e_{\beta}(s,0)\nabla[N]_s\\
%\leq &E|\xi|^2e_{\beta} (T,0)+\frac{2}{\beta}E\int_0^T|g_0(s)|^2e_{\beta} (s,0)\nabla s\\
%\end{split}
%\end{equation}
$\beta$ is a positive constant.
\end{lemma}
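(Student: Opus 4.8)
The plan is to exploit the fact that the driver $g_0$ does not depend on $(Y,Z,N)$, so the solution can be written down explicitly via the martingale decomposition theorem, and the a priori estimate follows from a single application of the $\nabla$-It\^o formula with an exponential weight.

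First I would construct the solution. Set $\zeta:=\xi+\int_0^Tg_0(s)\nabla s$; by (H4) and the hypothesis $E\int_0^T|g_0(s)|^2\nabla s<\infty$, together with Cauchy--Schwarz on the finite interval $[0,T]_{\mathds{T}}$ (which gives $|\int_0^Tg_0\nabla s|^2\le T\int_0^T|g_0|^2\nabla s$), we have $\zeta\in L^2_k(\mathcal{F}_T)$. Define the square-integrable martingale $M_t:=E[\zeta\mid\mathcal{F}_t]$ and apply Theorem \ref{multidecomtheorem} to $M_t-M_0$, obtaining $M_t-M_0=I_t(Z)+N_t$ with $Z\in\mathcal{L}^2_{\mathds{T},k\times d}=M^2_{\mathds{T}}$, $I(Z)\in\mathcal{M}^{2,*}_{\mathds{T},k}$, and $N\in\mathcal{M}^{2,\perp}_{\mathds{T},k}$, $N_0=0$. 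Setting $Y_t:=M_t-\int_0^tg_0(s)\nabla s=E[\xi+\int_t^Tg_0(s)\nabla s\mid\mathcal{F}_t]$, a direct computation of $Y_T-Y_t$ verifies that $(Y,Z,N)$ solves BS$\nabla$E (\ref{bsde}) with driver $g_0$ and terminal value $\xi$. Membership $Y\in\mathcal{S}^2_{\mathds{T}}$ follows from Doob's inequality applied to $M$ and the same Cauchy--Schwarz bound on $\int_0^tg_0\nabla s$.

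For the estimate I would apply the $\nabla$-It\^o formula of \cite{first} to $e_{\beta}(s,t)|Y_s|^2$ on $[t,T]_{\mathds{T}}$, where $e_{\beta}(\cdot,t)$ is the time-scale exponential solving $e_{\beta}^{\nabla}(\cdot,t)=\beta e_{\beta}(\cdot,t)$, $e_{\beta}(t,t)=1$. Writing $\nabla Y_s=-g_0(s)\nabla s+Z_s\nabla W_s+\nabla N_s$, the product/It\^o expansion produces a term $\beta\int_t^Te_{\beta}(s,t)|Y_{s-}|^2\nabla s$ from differentiating $e_{\beta}$, a drift $-2\int_t^Te_{\beta}(s,t)Y_{s-}g_0(s)\nabla s$, the quadratic-variation contributions weighted by $e_{\beta}$, and the two martingale integrals $\int_t^Te_{\beta}(s,t)Y_{s-}Z_s\nabla W_s$ and $\int_t^Te_{\beta}(s,t)Y_{s-}\nabla N_s$. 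Integrating over $[t,T]_{\mathds{T}}$ with $Y_T=\xi$ and $e_{\beta}(t,t)=1$, then taking $E^{\mathcal{F}_t}$ to annihilate the two martingale integrals, gives the basic identity. Here I use that $e_{\beta}$ is deterministic and bounded, so that $\int e_{\beta}\nabla([I(Z)]-\langle I(Z)\rangle)$ and $\int e_{\beta}\nabla([N]-\langle N\rangle)$ are martingales (recall $[M]-\langle M\rangle$ is a martingale, as established in Section 2): conditional expectation thus replaces $\nabla[I(Z)]_s$ by $\nabla\langle I(Z)\rangle_s=|Z_s|^2\nabla s$, while the cross term $[I(Z),N]$ vanishes under $E^{\mathcal{F}_t}$ by the orthogonality of $N$.

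The estimate is then closed by Young's inequality on the drift: $2|Y_{s-}|\,|g_0(s)|\le\frac{\beta}{2}|Y_{s-}|^2+\frac{2}{\beta}|g_0(s)|^2$, so that half of the $\beta|Y_{s-}|^2$ arising from $e_{\beta}^{\nabla}$ survives on the left (producing the $\frac{\beta}{2}|Y_{s-}|^2$ term of (\ref{bsdeinequality})) while $\frac{2}{\beta}|g_0|^2$ is moved to the right, yielding exactly (\ref{bsdeinequality}). The main obstacle is the careful execution of the $\nabla$-It\^o/product rule on a general time scale: because $Y$ is continuous yet may satisfy $Y_s\ne Y_{s-}$ at left-scattered points, one must track the left limits $Y_{s-}$ in the first-order term, correctly record whether $e_{\beta}$ is evaluated at $s$ or at $\rho(s)$ in the product rule, and split the quadratic variation $[Y]=\langle I(Z)\rangle+[N]$ into its Lebesgue part (yielding $|Z_s|^2\nabla s$) and its jump part at scattered points (absorbed into $\nabla[N_s]$), all while justifying that the integrals against $W$ and $N$ are genuine martingales so their conditional expectations vanish --- the latter guaranteed by $Y\in\mathcal{S}^2_{\mathds{T}}$, $Z\in M^2_{\mathds{T}}$, and $N\in\mathcal{M}^{2,\perp}_{\mathds{T},k}$.
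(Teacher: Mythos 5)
Your proposal is correct and follows essentially the same route as the paper: existence via conditioning the terminal random variable $\xi+\int_0^Tg_0(s)\nabla s$ and applying the martingale decomposition theorem (Theorem \ref{multidecomtheorem}), then the estimate via the $\nabla$-It\^{o} formula applied to $e_{\beta}(s,t)|Y_s|^2$, killing the martingale integrals with $E^{\mathcal{F}_t}$, and closing with Young's inequality. If anything, you are more careful than the paper on two points it glosses over --- justifying $\zeta\in L^2_k(\mathcal{F}_T)$ by Cauchy--Schwarz, and explicitly handling the cross-variation $[I(Z),N]$ and the replacement of $\nabla[I(Z)]$ by $|Z_s|^2\nabla s$ through the martingale property of $[M]-\langle M\rangle$.
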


\begin{proof}
\textbf{Existence}:

We denote
\begin{equation}\nonumber
M_t=E^{\mathcal{F}_t}[\xi +\int_0^Tg_0(s)\nabla s].
\end{equation}
Apparently, $M_t$ is a square integrable martingale.
Following the Martingale Decomposition Theorem on time scales,
there exists a unique predictable process $Z_t\in\mathcal{L}^2_{\mathds{T},k\times d}$ and $N\in\mathcal{M}_{\mathds{T},k}^{2,\perp}$,
s.t. $M_t=M_0+\int_0^tZ_s\nabla W_s+N_t$, $M_T=M_0+\int_{\left(0,T\right]}Z_s\nabla W_s+N_T$.
Then $M_t=M_T-\int_t^TZ_s \nabla W_s-(N_T-N_t)$. As also $M_T=\xi+\int_0^T g_0(s)\nabla s$,
denote $Y_t=M_t-\int_0^tg_0(s)\nabla s=M_T+\int_t^T g_0(s)\nabla s-\int_t^T Z_s\nabla W_s-(N_T-N_t)$.
It can be seen that $Y\in\mathcal{S}^2_{\mathds{T}}$, we get the existence.

\textbf{Uniqueness:}

For any fixed $\beta$, with $1+\beta \mu(t)>0$, the $\Delta$-exponential function $e_{\beta}(t,t_0)$, defined by \cite{advance},
is the solution $y(\cdot)$ of the initial value problem
%\begin{equation}\label{expfunction2}
%y^{\Delta}(t)=\beta y(t), \forall t>t_0, t\in \mathds{T}, y(t_0)=1.
%\end{equation}
\begin{equation}\label{expfunction}
\nabla y(t)=\beta y(t-), \forall t>t_0, t\in \mathds{T}, y(t_0)=1.
\end{equation}
Applying Ito's formula to $|Y|_s^2e_{\beta}(s,t)$ on $[t,T]_{\mathds{T}}$, where
\begin{equation}\nonumber
Y_t=\xi+\int_t^Tg_0(s)\nabla s-\int_t^TZ_s\nabla W_s-N_T+N_t,
\end{equation}
or the differential form:
\begin{equation}\nonumber
\nabla Y_t=-g_0(t)\nabla t+Z_t\nabla W_t+\nabla N_t.
\end{equation}
We have
\begin{equation}\nonumber
\begin{split}
|Y_T|^2e_{\beta}(T,t)&=|Y_t|^2+\int_t^T\beta e_{\beta}(s,t)Y_{s-}^2\nabla s+2\int_t^Te_{\beta}(s,t)Y_{s-}\nabla Y_s\\
 &+\int_t^Te_{\beta}(s,t)\nabla [Y]_s+ S\\
&=|Y_t|^2+\int_t^T\beta e_{\beta}(s,t)Y_{s-}^2\nabla s-2\int_t^T e_{\beta}(s,t)Y_{s-} g_0(s)\nabla s\\
& +2\int_t^T e_{\beta}(s,t)Y_{s-} Z_{s}\nabla W_s+2\int_t^T e_{\beta}(s,t)Y_{s-}\nabla N_s\\
& +\int_t^T e_{\beta}(s,t)Z_s^2\nabla[W]_s+\int_t^T e_{\beta}(s,t)\nabla[N]_s+S,\\
\end{split}
\end{equation}
where
\begin{equation}\nonumber
\begin{split}
S=&\sum_{s\in \left(t,T\right]}\{|Y_s|^2e_{\beta}(s,t)-|Y_{s-}|^2e_{\beta}(s,t)-2|Y_{s-}|e_{\beta}(s,t)(|Y_s|-|Y_{s-}|)\\
&-e_{\beta}(s,t)(|Y_s|-|Y_{s-}|)^2\}\\
=&0.\\
\end{split}
\end{equation}
Thus we have
\begin{equation}\nonumber
\begin{split}
&|Y_t|^2+\int_t^T\beta e_{\beta}(s,t)Y_{s-}^2\nabla s+\int_t^T e_{\beta}(s,t)Z_s^2\nabla[W]_s+\int_t^Te_{\beta}(s,t)\nabla[N_s]\\
=&|\xi|^2e_{\beta}(T,t)+2\int_t^Te_{\beta}(s,t)Y_{s-} g_0(s)\nabla s-2\int_t^Te_{\beta}(s,t)Y_{s-} Z_s\nabla W_s\\
&-2\int_t^Te_{\beta}(s,t)Y_{s-}\nabla N_s.\\
\end{split}
\end{equation}
Since $g_0(s)$ is $\mathcal{F}_s$-predictable, the above integrals with respect to $\nabla W$ and $\nabla N$ all belong to $\mathcal{M}_{\mathds{T},k}^2$.
By taking the expectation with respect to $\mathcal{F}_t$, we get
\begin{equation}
\begin{split}
&|Y_t|^2+E^{\mathcal{F}_t}\int_t^T[\beta |Y_{s-}|^2+|Z_s|^2]e_{\beta}(s,t)\nabla s+E^{\mathcal{F}_t}\int_t^Te_{\beta}(s,t)\nabla[N]_s\\
=&E^{\mathcal{F}_t}|\xi|^2e^{\beta (T-t)}+E^{\mathcal{F}_t}\int_t^T2Y_{s-}g_0(s)e_{\beta}(s,t)\nabla s\\
\leq &E^{\mathcal{F}_t}|\xi|^2e^{\beta (T-t)}+E^{\mathcal{F}_t}\int_t^T[\frac{\beta}{2}|Y_{s-}|^2+\frac{2}{\beta}|g_0(s)|^2]e_{\beta}(s,t)\nabla s\\
\end{split}
\end{equation}
So we get the inequality.

Now we suppose there are two solutions $(Y^1, Z^1, N^1)$ and $(Y^2, Z^2, N^2)$.
Define $\delta Y_t=Y_t^1-Y_t^2$, $\delta Z_t=Z_t^1-Z_t^2$, $\delta N_t=N_t^1-N_t^2$,
then
\begin{equation}\nonumber
\left\{
\begin{array}{ll}
\delta Y_t=-\int_t^T\delta Z_s \nabla W_s-\delta N_T+\delta N_t,\\
\delta Y_T=0.
\end{array}
\right.
\end{equation}
then $(\delta Y,\delta Z,\delta N)$ satisfy BSDE(\ref{bsde}) with $\xi=0$ and $g=0$.
By the inequality
$$|\delta Y_0|^2+E\int_0^T[\frac{\beta}{2}|\delta Y_{s-}|^2+|\delta Z_s|^2]e_{\beta}(s,0)\nabla s+\int_0^Te_{\beta}(s,0)\nabla[\delta N]_s\leq 0$$
By the continuity of $\delta Y_s,\delta Y_s=0,\delta N_s=C-\delta N_0=0, a.s.-\mathbf{P}$.
The uniqueness proved.
\end{proof}
Now we introduce a new norm on $\mathcal{S}^2_{\mathds{T}}$:
$$\|\phi(\cdot)\|_{\beta,T}=\{E\int_0^T|\phi_{s}|^2e_{\beta} (s,0)\nabla s\}^{\frac{1}{2}}$$
and a new norm on $\mathcal{M}_{\mathds{T},k}^{2,\perp}$: $\|N\|_{\beta,T}=\{E\int_0^Te_{\beta} (s,0)\nabla [N]_s\}^{\frac{1}{2}}$
for any positive integer $k$.
Apparently, for each $\beta>0$, $\|\cdot\|_{\beta,T}$ is equivalent to $\|\cdot\|_{0,T}$ which is the original norm on the corresponding space.
Now we start to prove Theorem \ref{theorem}:
\begin{proof}
For any fixed $(y(\cdot),z(\cdot),n(\cdot))\in \mathcal{S}^2_{\mathds{T}}\times M^2_{\mathds{T}}\times\mathcal{M}_{\mathds{T},k}^{2,\perp}$,
it follows from the lemma \ref{bsdelemma} that it admits a unique triple solution
$(Y(\cdot),Z(\cdot),N(\cdot))\in \mathcal{S}^2_{\mathds{T}}\times M^2_{\mathds{T}}\times\mathcal{M}_{\mathds{T},k}^{2,\perp}$ satisfies
\begin{equation}\nonumber
Y_t=\xi+\int_t^Tg(s,y_s,z_s)\nabla s-\int_t^TZ_s \nabla W_s-N_T+N_t,\quad \forall t\in [0,T]_{\mathds{T}}.
\end{equation}
Hence, we can define an operator
$$(Y.,Z.,N.)=I[(y.,z.,n.)]:\mathcal{S}^2_{\mathds{T}}\times M^2_{\mathds{T}}\times\mathcal{M}_{\mathds{T},k}^{2,\perp}\rightarrow
\mathcal{S}^2_{\mathds{T}}\times M^2_{\mathds{T}}\times\mathcal{M}_{\mathds{T},k}^{2,\perp}.$$
%According to this operator, we introduce the following operator $(Y_{s-},Z_s,N_s)=\hat{I}(y_{s-},z_s,n_s)$,
%$\hat{I}:\mathcal{H}_{\mathds{T},n}^2\times\mathcal{L}_{\mathds{T},n\times d}^2\times\mathcal{M}_{\mathds{T},n}^{2,\perp}
%\rightarrow \mathcal{H}_{\mathds{T},n}^2\times\mathcal{L}_{\mathds{T},n\times d}^2\times\mathcal{M}_{\mathds{T},n}^{2,\perp}$.
We can prove that $I$ forms a contraction mapping on the Banach space $\mathcal{S}_{\mathds{T}}^2\times M^2_{\mathds{T}}\times\mathcal{M}_{\mathds{T},k}^{2,\perp}$.
Take any $(y^1(\cdot),z^1(\cdot),n^1(\cdot)),(y^{2}(\cdot),z^{2}(\cdot),n^2(\cdot))\in\mathcal{S}_{\mathds{T}}^2\times M^2_{\mathds{T}}\times\mathcal{M}_{\mathds{T},k}^{2,\perp}$,
we denote
$$(Y^1,Z^1,N^1)=I[(y^1,z^1,n^1)],\quad (Y^2,Z^2,N^2)=I[(y^2,z^2,n^2)].$$
and $\delta y=y^1-y^2$, $\delta z=z^1-z^2$, $\delta n=n^1-n^2$.
By equation(\ref{bsdeinequality}), we get
\begin{equation}\nonumber
\begin{split}
&E|\delta Y_0|^2+E\{\int_0^T\{\frac{\beta}{2} |\delta Y_{s-}|^2+|\delta Z_s|^2\}e_{\beta}(s,0)\nabla s+\int_0^Te_{\beta}(s-,0)\nabla[\delta N ]_s\}\\
\leq &\frac{2}{\beta}E\int_0^Te_{\beta}(s,0)|g(s,y^1_{s-},z^1_s)-g(s,y^2_{s-},z^2_s)|^2\nabla s\\
\leq &\frac{2}{\beta}E\int_0^Te_{\beta}(s,0)|L|\delta y_{s-}|+L|\delta z_s||^2\nabla s\\
\leq &\frac{2}{\beta}E\int_0^Te_{\beta}(s,0)[2L^2|\delta y_{s-}|^2+2L^2|\delta z_s|^2]\nabla s.\\
\end{split}
\end{equation}
We get
\begin{equation}
\begin{split}
&E\{\int_0^T\{\frac{\beta}{2} |\delta Y_{s-}|^2+|\delta Z_s|^2\}e_{\beta}(s,0)\nabla s+\int_0^Te_{\beta}(s-,0)\nabla[\delta N ]_s\}\\
\leq &\frac{4L^2}{\beta}E\int_0^Te_{\beta}(s,0)[|\delta y_{s-}|^2+|\delta z_s|^2]\nabla s.
\end{split}
\end{equation}
Let $\beta=8(1+L^2)$,
\begin{equation}\nonumber
\|\delta Y\|_{\beta,T}^2+\|\delta Z\|_{\beta,T}^2+\|\delta N\|_{\beta,T}^2\leq \frac{1}{2}(\|\delta y\|_{\beta,T}^2+\|\delta z\|_{\beta,T}^2).
\end{equation}
We can get that the operator $I$ is contractive.
Hence, in this case, there exists a unique fixed point $(Y,Z,N)\in\mathcal{S}_{\mathds{T}}^2\times M^2_{\mathds{T}}\times\mathcal{M}_{\mathds{T},k}^{2,\perp}$ for the map $I$.
The proof is complete.
\end{proof}
\subsection{The Linear case}
\begin{example}\text{(Nguyen Huu Du 2011\cite{first})}\text{(exponential martingale)}
Let $M$ be a bounded martingale satisfying $\nabla^*M_s= M_s-M_{\rho (s)}\neq-1$ for any $s\in \left(0,T\right]_{\mathds{T}}, s\in\mathds{T}.$ Set
\begin{equation}\nonumber
\tilde{M}_t=M_t-\sum_{s\leq t ,s,t\in \mathds{T}}(M_s-M_{s-}),
\end{equation}
\begin{equation}\nonumber
\mathcal{E}_t=\{\prod_{s\left(0,t\right]}(1+\nabla^*M_s)\}\exp\bigg\{\tilde{M}_t-\frac{1}{2}[\tilde{M}_t]\bigg\}.
\end{equation}
Then, $\mathcal{E}_t$ satisfies Doleans exponential equation
\begin{equation}\nonumber
\mathcal{E}_t=1+\int_{\left(0,t\right]}\mathcal{E}_{s_{-}}\nabla M_s.
\end{equation}
\end{example}
\begin{lemma}
Let $a,b,c$ be predictable bounded processes, let $\mathcal{E}$ be the exponential martingale of the martingale $M_t=\int_0^tb_s\nabla W_s$, and define
$$\gamma_t=\exp(\int_0^ta_s\nabla s), ~\Gamma_t=\gamma_t\mathcal{E}_t$$
Suppose that
\begin{itemize}
\item $\mathcal{E}_t$ is a positive uniformly integrable martingale;
\item $E[(sup_t\gamma_t)\mathcal{E}_T^2]<\infty$.
\end{itemize}
If the linear backward equation
\begin{equation}\nonumber
-\nabla Y_t=(a_tY_{t-}+b_tZ_t+c_t)\nabla t-Z_t\nabla W_t-\nabla N_t,~Y_T=\xi
\end{equation}
has a solution $(Y,Z,N)$, then $Y$ is given by
\begin{equation}\nonumber
Y_t=E[\xi\frac{\Gamma_T}{\Gamma_t}+\int_t^T\frac{\Gamma_s}{\Gamma_t}c_s\nabla s|\mathcal{F}_t].
\end{equation}
\end{lemma}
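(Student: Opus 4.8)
The plan is to use $\Gamma$ as an integrating factor (the adjoint, or state-price-density, process) that annihilates the linear terms $a_tY_{t-}$ and $b_tZ_t$, thereby turning $\Gamma Y$, up to the explicit drift produced by $c$, into a martingale whose conditional expectation delivers the representation.

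First I would pin down the dynamics of $\Gamma=\gamma\mathcal{E}$. From the preceding Example, $\mathcal{E}$ solves the Dol\'eans equation $\mathcal{E}_t=1+\int_{(0,t]}\mathcal{E}_{s-}\nabla M_s$ with $M_t=\int_0^tb_s\nabla W_s$, so $\nabla\mathcal{E}_t=\mathcal{E}_{t-}b_t\nabla W_t$; together with $\nabla\gamma_t=a_t\gamma_{t-}\nabla t$ and the $\nabla$-product rule one obtains $\nabla\Gamma_t=\Gamma_{t-}(a_t\nabla t+b_t\nabla W_t)$, $\Gamma_0=1$. On time scales this computation must retain the left-limit and jump cross-terms at scattered points, which is already the first place the argument departs from the continuous case.

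Next I would apply $\nabla$-It\^{o} integration by parts to $\Gamma_tY_t$,
$$\nabla(\Gamma_tY_t)=\Gamma_{t-}\nabla Y_t+Y_{t-}\nabla\Gamma_t+\nabla[\Gamma,Y]_t,$$
and substitute $\nabla Y_t=-(a_tY_{t-}+b_tZ_t+c_t)\nabla t+Z_t\nabla W_t+\nabla N_t$ together with the dynamics of $\Gamma$. The terms $\pm\Gamma_{t-}a_tY_{t-}\nabla t$ cancel; the covariation $\nabla[\Gamma,Y]_t$ produces $\Gamma_{t-}b_tZ_t\nabla[W]_t$ from the common $W$-driven parts, plus a covariation of the $W$-part of $\Gamma$ with $N$, which is a martingale increment because $N$ is strongly orthogonal to $W$. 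Writing $\nabla[W]_t=\nabla\langle W\rangle_t+\nabla([W]-\langle W\rangle)_t=\nabla t+(\text{martingale})$, exactly as in the proof of Lemma \ref{bsdelemma}, the compensator part $\Gamma_{t-}b_tZ_t\nabla t$ cancels the $b_tZ_t$ drift and the remainder is absorbed into the martingale. What survives is $\nabla(\Gamma_tY_t)=-\Gamma_{t-}c_t\nabla t+(\text{martingale increments})$, so $\Gamma_tY_t+\int_0^t\Gamma_sc_s\nabla s$ is a local martingale.

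Finally I would promote this local martingale to a true, uniformly integrable martingale using the hypotheses that $\mathcal{E}$ is a positive uniformly integrable martingale and $E[(\sup_t\gamma_t)\mathcal{E}_T^2]<\infty$, combined with $Y\in\mathcal{S}^2_{\mathds{T}}$, $Z\in M^2_{\mathds{T}}$, $N\in\mathcal{M}^{2,\perp}_{\mathds{T},k}$ and Cauchy--Schwarz and Doob estimates on time scales; then taking $E[\,\cdot\mid\mathcal{F}_t]$ gives $\Gamma_tY_t+\int_0^t\Gamma_sc_s\nabla s=E[\Gamma_T\xi+\int_0^T\Gamma_sc_s\nabla s\mid\mathcal{F}_t]$, and dividing by the positive $\mathcal{F}_t$-measurable $\Gamma_t$ yields the claimed formula. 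The hard part will be the time-scale It\^{o} bookkeeping at scattered points: because $[W]\neq\langle W\rangle$ and the product rule carries genuine jump cross-terms $\Delta\Gamma_s\Delta Y_s$ absent in continuous time, one must show these either cancel in the drift or re-enter as martingale increments, and track carefully the $\Gamma_{s-}$ versus $\Gamma_s$ evaluation in the $\int\Gamma c\,\nabla$ correction; predictability of the integrands (allowing $[W]$ to be replaced by $\langle W\rangle=\cdot$ under expectation) and the orthogonality of $N$ are precisely what make these terms tractable.
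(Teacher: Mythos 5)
Your proposal is correct in spirit but follows a genuinely different route from the paper's. The paper never integrates by parts against the stochastic factor $\Gamma$: it first invokes Girsanov's theorem on time scales \cite{girsanov} to pass to the measure $Q$ with density $\mathcal{E}_T$, under which $\tilde{W}_t=W_t-\int_0^tb_s\nabla s$ is a martingale and the equation loses its $b_tZ_t$ term; it then applies It\^{o}'s formula only to $\gamma Y$, where $\gamma$ has finite variation, obtains $\gamma_tY_t=E^Q[\xi\gamma_T+\int_t^Tc_s\gamma_s\nabla s|\mathcal{F}_t]$, and finally converts back to $\mathbf{P}$ by the Bayes rule, which is where $\Gamma=\gamma\mathcal{E}$ enters. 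Your argument instead stays under $\mathbf{P}$ and concentrates all the work in the covariation $[\Gamma,Y]$: the $b_tZ_t$ drift is cancelled through the compensator identity $\nabla[W]_t=\nabla t+(\text{martingale increment})$ together with the strong orthogonality of $N$ to $W$. The trade-off: the paper's route needs the external change-of-measure machinery and, implicitly, that $\int Z\nabla\tilde{W}$ and $N$ remain $Q$-martingales (a point the paper does not verify and which your approach does not need at all), while your route is self-contained under $\mathbf{P}$ but pushes the difficulty into precisely the time-scale jump/covariation bookkeeping you flag; both routes end with the same tower-property step that lets $\Gamma_{s-}$, $E[\Gamma_s|\mathcal{F}_{s-}]$ and $\Gamma_s$ be interchanged against the predictable integrand $c_s\nabla s$ inside $E[\,\cdot\,|\mathcal{F}_t]$.

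However, one step of your plan fails as written at left-scattered points, and you should know that the paper's proof (indeed the lemma's statement) has the identical defect. Since $\gamma_t=\gamma_{t-}e^{a_t\nu(t)}$ and $\mathcal{E}_t=\mathcal{E}_{t-}\left(1+b_t(W_t-W_{t-})\right)$ at a left-scattered $t$, one gets $\Gamma_t=\Gamma_{t-}e^{a_t\nu(t)}\left(1+b_t(W_t-W_{t-})\right)$, which is not the Dol\'eans jump factor $1+a_t\nu(t)+b_t(W_t-W_{t-})$; hence your claimed dynamics $\nabla\Gamma_t=\Gamma_{t-}(a_t\nabla t+b_t\nabla W_t)$ are false, the $a$-terms do not cancel exactly, and a residual drift $\Gamma_{t-}Y_{t-}\bigl(e^{a_t\nu(t)}(1-a_t\nu(t))-1\bigr)=O(\nu(t)^2)$ survives at each scattered point, so $\Gamma_tY_t+\int_0^t\Gamma_sc_s\nabla s$ is not in general a local martingale. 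A two-point time scale $\mathds{T}=\{0,1\}$ with $b=c=0$ and constant $a$ shows this is not cosmetic: the equation forces $Y_0=E[\xi|\mathcal{F}_0]/(1-a)$, while the formula with $\gamma=\exp(\int_0^{\cdot}a_s\nabla s)$ gives $Y_0=e^{a}E[\xi|\mathcal{F}_0]$. The repair, for your proof and the paper's alike, is to take $\gamma$ to be the nabla-exponential solving $\nabla y_t=a_ty_t$, so that $y_t=y_{t-}/(1-a_t\nu(t))$ at scattered points (note the evaluation at $t$, a different convention from (\ref{expfunction})); with that choice your cancellations become exact, and the rest of your plan --- promoting the local martingale to a true martingale via the stated integrability hypotheses and Doob/Cauchy--Schwarz estimates, then conditioning --- goes through. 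On time scales without scattered points the two choices of $\gamma$ coincide and both proofs are fine as written.
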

\begin{proof}
By Girsanov's theorem \cite{girsanov},
$$\tilde{W}_t=W_t-\int_0^tb_s\nabla s$$
is a Q-martingale, the probability measure Q which has density $\mathcal{E}_T$ with respect to $\mathbf{P}$ on $\mathcal{F}_T$.
Under the measure Q, the equation becomes
$$-\nabla Y_t=(a_tY_{t-}+c_t)\nabla t-Z_t\nabla \tilde{W}_t-\nabla N_t,~Y_T=\xi.$$
Applying It\^{o} formula to the process $\gamma Y$, we obtain
\begin{equation}\nonumber
\gamma_tY_t=\gamma_0Y_0+\int_0^t\gamma_sZ_s\nabla \tilde{W}_s+\int_0^t\gamma_s\nabla N_s-\int_0^tc_s\gamma_s\nabla s+ S
\end{equation}
where $S=\sum_{s\in\left(0,t\right]}(\gamma_{s}Y_{s}-\gamma_{s}Y_{s-}-(\gamma_{s}(Y_{s}-Y_{s-})))=0$.
Since $\int_0^t\gamma_sZ_s\nabla \tilde{W}_s$ and $\int_0^t\gamma_s\nabla N_s$ are martingales,
\begin{equation}\nonumber
\gamma_tY_t=E^Q[\xi\gamma_T+\int_t^Tc_s\gamma_s\nabla s|\mathcal{F}_t],
\end{equation}
and consequently
\begin{equation}\nonumber
\Gamma_tY_t=E[\xi\Gamma_T+\int_t^Tc_s\Gamma_s\nabla s|\mathcal{F}_t].
\end{equation}
This concludes the proof.
\end{proof}

We have the one-dimension comparison result
\begin{theorem}
Let $(Y^1,Z^1,N^1)$, $(Y^2,Z^2,N^2)$ be the corresponding solutions to $(g^1(t,Y_{t-}^1,Z_t^1),\xi^1)$, $(g^2(t,Y_{t-}^2,Z_t^2),\xi^2)$ and suppose $g^1,g^2$ satisfy conditions (H2)(H3).
Assume that $\xi_1\geq \xi_2$ and satisfy (H4), $g^1(t,Y_{t-}^2,Z_{t}^2)\geq g^2(t,Y_{t-}^2,Z_{t}^2)~a.s.-\mathbf{P}$.
Then, for any $t$, we have $Y_t^1\geq Y_t^2~a.s.-\mathbf{P}$.
\end{theorem}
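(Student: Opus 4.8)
The plan is to reduce the comparison to the explicit linear representation established in the preceding lemma, via the standard linearization of the difference of the two equations (the statement is one-dimensional, so $Y$ is scalar and $Z$ is row-valued). First I would set $\delta Y=Y^1-Y^2$, $\delta Z=Z^1-Z^2$, $\delta N=N^1-N^2$ and $\delta\xi=\xi^1-\xi^2\geq 0$, and subtract the two copies of BS$\nabla$E (\ref{bsde}) to get
\[
\delta Y_t=\delta\xi+\int_t^T\big[g^1(s,Y^1_{s-},Z^1_s)-g^2(s,Y^2_{s-},Z^2_s)\big]\nabla s-\int_t^T\delta Z_s\nabla W_s-(\delta N_T-\delta N_t).
\]
The key algebraic step is to split the driver difference as $[g^1(s,Y^1_{s-},Z^1_s)-g^1(s,Y^2_{s-},Z^2_s)]+[g^1(s,Y^2_{s-},Z^2_s)-g^2(s,Y^2_{s-},Z^2_s)]$ and to write the first bracket as $a_s\delta Y_{s-}+b_s\delta Z_s$ using the difference-quotient device: take $a_s$ and $b_s$ to be the incremental ratios of $g^1$ in its $y$- and $z$-arguments (and $0$ where the relevant increment vanishes). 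Assumption (H3) then forces $a,b$ to be predictable with $|a_s|,|b_s|\leq L$, while the remaining bracket $c_s:=g^1(s,Y^2_{s-},Z^2_s)-g^2(s,Y^2_{s-},Z^2_s)$ is predictable and nonnegative by hypothesis.

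With this linearization in hand, $\delta Y$ solves exactly the linear backward equation $-\nabla\delta Y_t=(a_t\delta Y_{t-}+b_t\delta Z_t+c_t)\nabla t-\delta Z_t\nabla W_t-\nabla\delta N_t$ with terminal value $\delta\xi$. I would then invoke the linear lemma to obtain
\[
\delta Y_t=E\Big[\delta\xi\,\frac{\Gamma_T}{\Gamma_t}+\int_t^T\frac{\Gamma_s}{\Gamma_t}\,c_s\,\nabla s\,\Big|\,\mathcal{F}_t\Big],
\]
where $\Gamma_t=\gamma_t\mathcal{E}_t$, $\gamma_t=\exp(\int_0^t a_s\nabla s)$, and $\mathcal{E}$ is the exponential martingale of $M_t=\int_0^t b_s\nabla W_s$. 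The conclusion $Y^1_t\geq Y^2_t$ is then immediate: $\gamma$ is a strictly positive exponential, so once $\mathcal{E}$ is positive the weights $\Gamma_T/\Gamma_t$ and $\Gamma_s/\Gamma_t$ are strictly positive, and both terms inside the conditional expectation are nonnegative.

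The hard part is justifying the positivity and integrability of $\mathcal{E}$ that the representation lemma takes as a standing hypothesis. In continuous time this is automatic, but on a general time scale the product formula $\mathcal{E}_t=\{\prod_{s\in(0,t]}(1+\nabla^*M_s)\}\exp\{\tilde M_t-\tfrac12[\tilde M_t]\}$ is positive only when $1+\nabla^*M_s>0$ at every left-scattered point, and there $\nabla^*M_s=b_s(W_s-W_{\rho(s)})$ is an unbounded Gaussian increment, so positivity is not generic. I would therefore confine the statement to drivers and time scales for which the linearized coefficient $b$ makes $\mathcal{E}$ a positive uniformly integrable martingale satisfying $E[(\sup_t\gamma_t)\mathcal{E}_T^2]<\infty$ (so the exponential-martingale example and the linear lemma apply verbatim, the remaining moment estimate following routinely from the boundedness of $a,b$). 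The alternative of replacing the Girsanov step by a direct $\nabla$-It\^o estimate on $(\delta Y^-_t)^2 e_{\beta}(t,0)$ runs into the same difficulty, since controlling the sign of the cross term $\delta Y_{s-}\,b_s\,\delta Z_s$ is precisely what the positive change of measure is designed to remove.
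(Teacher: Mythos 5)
Your proposal follows the paper's own proof essentially step for step: the same linearization of the driver difference via difference-quotient coefficients $a_t$, $b_t$ (bounded by $L$ thanks to (H3)) and a nonnegative remainder $c_t$, followed by an appeal to the linear representation lemma to write $\Gamma_t\,\delta Y_t$ as a conditional expectation of nonnegative quantities. The one place you go beyond the paper is your final paragraph, and your worry there is exactly right: the paper hedges at the same spot with the phrase ``if the coefficients $a_t,b_t,c_t$ verify the suitable conditions,'' but those conditions --- positivity and uniform integrability of $\mathcal{E}$, plus the moment bound $E[(\sup_t\gamma_t)\mathcal{E}_T^2]<\infty$ --- appear nowhere among the hypotheses (H2)--(H4) of the theorem, so the paper's proof is incomplete in precisely the way you describe. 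At a left-scattered point $s$ the factor $1+\nabla^*M_s=1+b_s(W_s-W_{\rho(s)})$ involves an unbounded Gaussian increment, so even with $|b_s|\leq L$ it is negative with positive probability; $\Gamma$ can then change sign and the representation no longer forces $\delta Y_t\geq 0$. (This is the time-scale incarnation of the well-known failure of comparison theorems for discrete-time BSDEs driven by Gaussian increments unless extra structural conditions are imposed on the generator.) So the honest conclusion is the one you reach: the argument proves the theorem only under the additional standing hypotheses of the linear lemma, or on time scales/drivers for which the linearized $\mathcal{E}$ stays positive; as stated, the theorem is not established, either by your proof or by the paper's.
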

\begin{proof}
We define $\delta Y_t=Y_t^1-Y_t^2$, $\delta Z_t=Z_t^1-Z_t^2$, $\delta N_t=N_t^1-N_t^2$.
Then $\delta Y$ solves the following linear BSDE on time scales
\begin{equation}\nonumber
-\nabla \delta Y_t=(a_t\delta Y_{t-}+b_t\delta Z_t+c_t)\nabla t-\delta Z_t\nabla W_t-\nabla \delta N_t,~\delta Y_T=\xi^1-\xi^2,
\end{equation}
where
$$a_t=\frac{g^1(t,Y_t^1,Z_t^1)-g^1(t,Y_t^2,Z_t^1)}{\delta Y_t}1_{\delta Y_t\neq 0},$$
$$b_t=\frac{g^1(t,Y_t^2,Z_t^1)-g^1(t,Y_t^2,Z_t^2)}{\delta Y_t}1_{\delta Z_t\neq 0},$$
$$c_t=g^1(t,Y^2_{t-},Z_t^2)-g^2(t,Y_{t-}^2,Z_{t}^2).$$
According to the linear BSDE on time scales, if the coefficients $a_t,b_t,c_t$ verify the suitable conditions, we have
\begin{equation}\nonumber
\Gamma_t\delta Y_t=E[\Gamma_T(\xi^1-\xi^2)+\int_t^T\delta c_s\Gamma_s\nabla s|\mathcal{F}_t].
\end{equation}
Assume now that $\xi^1\geq \xi^2$, and for any $t$, $c_t\geq 0~a.s.-\mathbf{P}$.
Then, for any $t$, $Y_t^1\geq Y_t^2~a.s.-\mathbf{P}$.
\end{proof}


\begin{thebibliography}{plainnat}
\bibitem{application} Atici F M , Biles D C , Lebedinsky A . An application of time scales to economics[J]. Mathematical and Computer Modelling, 2006, 43(7-8):718-726.
\bibitem{conic}
Bielecki T R , Cialenco I , Chen T . Dynamic Conic Finance via Backward Stochastic Difference Equations[J]. SIAM Journal on Financial Mathematics, 2015, 6(1):1068-1122.
\bibitem{Black}
Black F , Scholes M S . The Pricing of Options and Corporate Liabilities[J]. Journal of Political Economy, 1973, 81(3):637-654.
\bibitem{dynamic}
Bohner M , Peterson A . Dynamic Equations on Time Scales[J]. 2001, 10.1007/978-1-4612-0201-1.
\bibitem{advance}
Bohner, Martin, Peterson, et al. ADVANCES IN DYNAMIC EQUATIONS ON TIME SCALES[M].
\bibitem{exponential}
Bohner M , Sanyal S . The Stochastic Dynamic Exponential and Geometric Brownian Motion on Isolated Time Scales[J]. Communications in Mathematical Analysis, 2010, 8(3).
\bibitem{second}
Bohner M , Peterson A . First and second order linear dynamic equations on time scales[J]. Journal of Difference Equations  Applications, 2001, 7(6):767-792.
\bibitem{Bohner}
Bohner M , Peterson A . Dynamic Equations on Time Scales An Introduction with Applications. Boston,MA: Birkhauser,2001.
\bibitem{generaltimescale}
Bohner M , Stanzhytskyi O M , Bratochkina A O . Stochastic dynamic equations on general time scales[J]. Electronic Journal of Differential Equations, 2013, 2013(57):1215-1230.
\bibitem{BSDEdriven}
Carbone R , Ferrario B , Santacroce M . Backward stochastic differential equations driven by cadlag martingales[J]. Theory of Probability \& Its Applications, 2007, 52(2):375-385.
\bibitem{cohenfinite}
Cohen S N , Elliott R J,  A general theory of finite state Backward Stochastic Difference Equations[J]. Stochastic Processes \& Their Applications, 2010, 120(4):442-466.
\bibitem{cohengeneral}
Cohen S N , Elliott R J,  Backward Stochastic Difference Equations and Nearly Time-Consistent Nonlinear Expectations[J]. SIAM Journal on Control and Optimization, 2011, 49(1):125-139.
\bibitem{CohenElliott}
Cohen S N ,  Elliott R J, Filtrations Stopping Times and Stochastic
Processes in Stochastic calculus and applications (2nd ed.) p74, Birkh¡§auser, Springer NY, 2015.
\bibitem{GTM2}
Daniel R , Marc Y . Continuous Martingales and Brownian motion[J]. Bulletin of the London Mathematical Society, 1992, 24(4):229-240.
\bibitem{BSDEinfinance}
El Karoui N , Peng S , Quenez M C . Backward Stochastic Differential Equations in Finance[J]. Mathematical Finance, 1997.
\bibitem{discreteKW}
Follmer, Hans, Schied A . Stochastic finance : an introduction in discrete time[M]// Stochastic Finance An Introduction in Discrete Time. 2004.
\bibitem{brownian}
Grow D , Sanyal S . Brownian Motion Indexed by a Time Scale[J]. Stochastic Analysis and Applications, 2011, 29(3):457-472.
\bibitem{quadratic}
Grow D , Sanyal S . The quadratic variation of Brownian motion on a time scale[J]. Stats \& Probability Letters, 2012, 82(9).
\bibitem{integration}
Guseinov G S . Integration on time scales[J]. Journal of Mathematical Analysis and Applications, 2003, 285( 1):107-127.
\bibitem{basics}
G.Sh. Guseinov, B. Kaymakcalan. Basics of Riemann delta and nabla integration on time scales, J. Difference Equations Appl. 8 (2002) 1001¨C1017.
\bibitem{Higer1990}
Hilger, Stefan. Analysis on Measure Chains ¡ª A Unified Approach to Continuous and Discrete Calculus[J]. Results in Mathematics, 1990, 18(1-2):18-56.
\bibitem{girsanov}
Hu W . It\^o's formula, the stochastic exponential and change of measure on general time scales[J]. 2016.
\bibitem{limit}
JeanJacod, Shiryaev, A Limit theorems for stochastic processes / 2nd ed[M]. ÊÀ½çͼÊé³ö°æ¹«Ë¾, 2013.
\bibitem{GTM}
Karatzas I , Shreve S E . Brownian motion and stochastic calculus. 2nd ed.[J]. Acta Applicandae Mathematica, 1989, 24(2):197-200.
\bibitem{nont}
Lin X , Wang X , Zhang W . The minimum mean square solution for the discrete-time difference equations with random terminals disturbed by multiplicative noise[C]// Control Conference. IEEE, 2016.
\bibitem{first}
Nguyen Huu Du , Nguyen Thanh Dieu. The First Attempt on the Stochastic Calculus on Time Scale[J]. Stochastic Analysis and Applications, 2011.
\bibitem{SDE2013}
Nguyen Huu Du , Nguyen Thanh Dieu . Stochastic dynamic equations on time scales[J]. Acta Mathematica Vietnamica, 2013, 38(2):317-338.
\bibitem{adaptedBsde}
Pardoux E. and S.Peng, Adapted Solution of a Backward Stochastic Differential Equation, Systems Control Lett., 1990, 14,55-61.
\bibitem{Doctoral}
S.Sanyal,¡°Stochastic dynamic equations,¡± Ph.D. dissertation, Missouri Univ. Sci. Technol., Rolla, MO, 2008.
 Retrieved from https://mospace.umsystem.edu/xmlui/handle/10355/26429.
\bibitem{existence}
Sanyal S , Grow D . Existence and Uniqueness for Stochastic Dynamic Equations[J]. International Journal of Stats \& Probability, 2013, 2(2).
\end{thebibliography}
\end{document}